\newcommand{\ri}{\mathbb{R}^\infty}
\newcommand{\orbs}{\mathbf{O}_{\operatorname{gl}}}
\newcommand{\orbsPDF}{O\_gl}
\renewcommand{\L}{\mathbb{L}}
\newcommand{\cL}{\mathcal{L}}
\newcommand{\orbgh}{\operatorname{Orb}}
\newcommand{\orbghPDF}{Orb}
\newcommand{\orbh}{\orbgh^\prime}
\newcommand{\orbhPDF}{Orb′}
\newcommand{\etPDF}{Ẽ} 
\newcommand{\mono}{{\operatorname{mono}}}
\newcommand{\orbhmono}{\orbgh^{\prime, \mono}}
\DeclareMathOperator{\Rep}{Rep}
\DeclareMathOperator{\graph}{graph}
\newcommand{\cF}{\mathcal{F}}
\newcommand{\et}{\widetilde{E}}
\DeclareMathOperator{\Stab}{Stab}
\newcommand{\obo}{\mathbb{O}}
\newcommand{\catc}{\mathcal{C}}
\newcommand{\catd}{\mathcal{D}}
\newcommand{\cG}{\mathcal{G}}
\newcommand{\grpd}{{\operatorname{Grpd}}}
\newcommand{\grpdtop}{\grpd_{\Top}}
\newcommand{\B}{\mathbb{B}}
\newcommand{\maptopgrpd}{\map_{\grpdtop}}
\DeclareMathOperator{\pr}{pr}
\DeclareMathOperator{\fib}{fib}
\newcommand{\R}{\mathbb{R}}
\DeclareMathOperator{\sk}{sk}
\newcommand{\del}{\partial}
\DeclareMathOperator{\sti}{Stiefel}
\newcommand{\cV}{\mathcal{V}}
\newcommand{\cW}{\mathcal{W}}
\title%
{%
	A Comparison of two Models of Orbispaces\footnote%
	{%
		Published in \emph{Homology, Homotopy and Applications}, vol.~20(1),
		2018, pp.329--358,\newline
		available at
		\href{http://dx.doi.org/10.4310/HHA.2018.v20.n1.a19}
		{doi:10.4310/HHA.2018.v20.n1.a19}.
	}
}
\author%
{%
	Alexander Körschgen\footnote%
	{%
		The author was supported in part by a grant from the International Max
		Planck Research School on Moduli Spaces.
	}
}
\date{}
\begin{document}%
\maketitle%

\begin{abstract}
This paper proves that the two homotopy theories for or\-bi\-spaces
given by Gepner and Henriques and by Schwede, respectively, agree by
providing a zig-zag of Dwyer-Kan equivalences between the respective
topologically enriched index categories. The aforementioned authors establish 
various models for unstable global homotopy theory with compact Lie group
isotropy, and orbispaces serve as a common denominator for their particular
approaches. Although the two flavors of orbispaces are expected to agree with
each other, a concrete comparison zig-zag has not been known so far. We bridge
this gap by providing such a zig-zag which asserts that all those models for
unstable global homotopy theory with compact Lie group isotropy which have
been described by the authors named above agree with each other.

On our way, we provide a result which is of independent interest. For a large
class of free actions of a compact Lie group, we prove that the homotopy
quotient by the group action is weakly equivalent to the strict quotient. This
is a known result under more restrictive conditions, e.g., for free actions on
a manifold. We broadly extend these results to all free actions of a compact
Lie group on a compactly generated Hausdorff space.
\end{abstract}

\pdfbookmark[section]{\contentsname}{toc}
\tableofcontents

\section{Introduction}
Global homotopy theory is concerned with \emph{global spaces} which can be
thought of as spaces that are simultaneously equipped with compatible actions
of all compact Lie groups.
For any such group $G$, a global space admits a notion of $G$-fixed points,
and a map between two global spaces is a \emph{global equivalence} if it
induces a weak equivalence on all of these fixed point spaces.
There are also versions for other classes of groups which we elaborate on in
Subsection~\ref{subsec:relative}.

In order to make the notion of global spaces more precise, various models
have emerged:
In \cite{gepner_homotopy_2007}, Gepner and Henriques 
equip stacks and topological groupoids with certain homotopical structures and
prove that they yield homotopy theories that are equivalent to their version
of orbispaces. This gives rise to three different models for global spaces.

In \cite{schwede_orbispaces_2017}, Schwede introduces $\cL$-spaces and
orthogonal spaces along with his version of orbispaces, together with model
category structures on all of these categories, and verifies that they are
Quillen equivalent. Therefore, he also describes three models for global
spaces.

In both cases, orbispaces are space-valued enriched presheaves on a
topologically enriched small category, the \emph{orbit category}, equipped
with the projective model structure.  However, the two variants of orbispaces
mentioned above are not equal on the nose because the
respective orbit categories differ.

The orbit category used in~\cite{gepner_homotopy_2007} is called $\orbgh$, and
the one from~\cite{schwede_orbispaces_2017} is denoted by $\orbs$. 
As we will recall, there is significant evidence that the two versions of
orbispaces should be equivalent. The objects of both $\orbgh$ and $\orbs$ are
given by compact Lie groups.
Given two such groups $H,G$, the mapping spaces $\orbs(H,G)$ and
$\orbgh(H,G)$ have the same weak
homotopy type, see Remark~\ref{rem:orbshgmisc}, but, at least to the author's
knowledge, no concrete zig-zag of maps has been written down so far.

This exposition shows that there is a zig-zag of weak equivalences between the
respective mapping spaces which is compatible with the given structures of
topologically enriched categories. We deduce that there is a zig-zag of
Dwyer-Kan equivalences between the orbit category $\orbs$
from~\cite{schwede_orbispaces_2017} and the orbit category $\orbgh$
from~\cite{gepner_homotopy_2007}. This
implies by~\cite{gepner_homotopy_2007,koerschgen_dwyer-kan_2017} that the
associated presheaf categories are Quillen equivalent via a zig-zag, and as a
consequence, all the models for global homotopy theory from both papers are
equivalent to each other.


\subsection{Results}
For two universal subgroups (see Definition~\ref{def:orbsgeneral})
$G,H \subseteq \L(\ri,\ri)$, set
\[
	\orbs(H,G) := (\mathbb{L}(\ri_G, \ri_H) /G)^H
\]
where $\L$ denotes the space of isometric linear embeddings and $\ri_G, \ri_H$
are both just $\ri$ equipped with the canonical $G$- and $H$-action respectively.
Moreover, let
\[
	\orbgh(H,G) := \map(H,G) \times_G EG
\]
where $\map(H,G)$ denotes the space of continuous (and hence smooth) group
homomorphisms from $H$ to $G$.

These are the mapping spaces of the topologically enriched categories $\orbs$
and $\orbgh$ which share the same set of objects, namely $\ob \orbs = \ob
\orbgh$ is the set of all universal subgroups of $\L(\ri,\ri)$. Note that a
topologically enriched functor which is the identity on objects is a Dwyer-Kan
equivalence if and only if it is a weak equivalence on all mapping spaces
(see Definition~\ref{def:dk} for the generic definition of a Dwyer-Kan
equivalence).

\begin{thm*}
	There is a zig-zag of two Dwyer-Kan equivalences, both of which are the
	identity on objects, between $\orbs$ and $\orbgh$.
\end{thm*}
This appears as Corollary~\ref{cor:zigzagdk}.

\begin{cor*}
There is a zig-zag of Quillen equivalences between the projective model
structures on the enriched presheaf categories $\Pre(\orbs,\Top)$ and
$\Pre(\orbgh,\Top)$.
\end{cor*}
This follows immediately using~\cite[Lemma~A.6]{gepner_homotopy_2007} or the
in-depth account~\cite[Theorem~3.5]{koerschgen_dwyer-kan_2017}.
Here, the first model category, $\Pre(\orbs,\Top)$ is the variant of orbispaces used
in~\cite{schwede_orbispaces_2017} while the second one, $\Pre(\orbgh,\Top)$ is
used in~\cite{gepner_homotopy_2007}.

We will discuss generalizations of these results for a monomorphism variant of
the orbit category and for global family versions in Section~\ref{sec:results}.

\subsection{Organization of the Paper}
The promised zig-zag of Dwyer-Kan equivalences between $\orbs$ and $\orbgh$
will consist of a third topologically enriched category $\orbh$ with $\ob
\orbh = \ob \orbs = \ob \orbgh$.

Section~\ref{sec:intermmappingspace} focuses
on the mapping space $\orbh(H,G)$ for two fixed universal subgroups
$H,G \subseteq \L(\ri,\ri)$ and its relation to the mapping spaces $\orbs(H,G)$ and
$\orbgh(H,G)$. While Subsection~\ref{subsec:orbghrem} recalls the definition
of $\orbgh(H,G)$ along with auxiliary results, Subsection~\ref{subsec:orbsrem}
proceeds similarly for $\orbs(H,G)$.

In Subsection~\ref{subsec:intermediatemappingspace}, we construct a free
$G$-space $\et(H,G)$, see Definition~\ref{def:ethg}, and set $\orbh(H,G) :=
\et(H,G) \times_G EG$. This space admits a canonical map to $
\orbgh(H,G)$, which is readily seen to be a weak equivalence, see
Proposition~\ref{prop:lhgwe}.

Therefore, it remains to specify a weak equivalence $\orbh(H,G) \to
\orbs(H,G)$, which is the content of Subsection~\ref{subsec:orbhtoorbs}. We
show that the quotient space $\et(H,G)/G$ is canonically homeomorphic to
$\orbs(H,G)$, see Proposition~\ref{prop:orbstriangle}. After showing that the
$G$-action on $\et(H,G)$ is free, we deduce that the natural map $\orbh(H,G) =
\et(H,G) \times_G EG \to \et(H,G) / G \cong \orbs(H,G)$ is a weak equivalence
(Proposition~\ref{prop:khgwe}). This concludes the construction of the zig-zag
on the level of individual mapping spaces:
\[
\begin{tikzcd}
	\orbgh(H,G) &
	\et(H,G) \times_G EG = \orbh(H,G) \lar[swap]{\simeq} \rar{\simeq} &
	\et(H,G) / G \cong \orbs(H,G).
\end{tikzcd}
\]

The goal of Section~\ref{sec:zigzagdk} is the extension of this zig-zag to a
zig-zag of Dwyer-Kan equivalences. However, the construction of the
topologically enriched category $\orbh(H,G)$ is not complete since we have not
specified composition laws yet. This is amended in
Subsection~\ref{subsec:comporbhorbgh}: As the composition law on $\orbh$ is
similar to the one on $\orbgh$, we simultaneously recall the
composition on $\orbgh$ and define the one on $\orbh$. In the end, we conclude
that the maps $\orbh(H,G) \to \orbgh(H,G)$ from
Subsection~\ref{subsec:intermediatemappingspace} extend to a Dwyer-Kan
equivalence $\orbh \to \orbgh$.
Finally, it is left to verify that the composition law on $\orbs$ is
compatible with the maps $\orbh(H,G) \to \orbs(H,G)$ introduced in
Subsection~\ref{subsec:orbhtoorbs}, which is the content of
Subsection~\ref{subsec:comporbhorbs}. In conclusion, we deduce that there is a
zig-zag of Dwyer-Kan equivalences:
\[
\begin{tikzcd}
	\orbgh &
	\orbh \lar[swap]{\simeq} \rar{\simeq} &
	\orbs.
\end{tikzcd}
\]

From the various auxiliary results of the appendix, we would like to highlight
a particular one: Theorem~\ref{thm:freeactionT2space} proves that whenever a compact
Lie group $G$ acts freely on a Hausdorff space $X$, the canonical map
\[
	X \times_G EG \to X/G
\]
is a weak equivalence.

\subsection*{Acknowledgments}
The author would like to acknowledge that the main idea for the construction
of the zig-zag of Dwyer-Kan equivalences is due to Thomas Nikolaus. He also
provided helpful hints and remarks concerning proof strategies and technical
details.

Moreover, the author would like to thank Fabian Hebestreit, Irakli
Patchkoria, and Christian Wimmer for fruitful discussions concerning the fiber
sequences in Subsection~\ref{subsec:freeactionhomquot}.
Finally, the author is grateful for advice and guidance by Stefan Schwede and
Peter Teichner.

\section{The Intermediate Mapping Space}
\label{sec:intermmappingspace}
Throughout the main body of this paper, the term \emph{space} will always
refer to a compactly generated weak Hausdorff space. This convention is
explained in more detail in Subsection~\ref{subsec:cgwhspaces} of the
appendix.

\subsection{A Reminder on \texorpdfstring{$\orbgh(H,G)$}{\orbghPDF(H,G)}}
\label{subsec:orbghrem}
\begin{definition}
	Let $H,G$ be Lie groups.
	\begin{enumerate}
		\item Denote by $\map(H,G)$ the space of all continuous (hence smooth)
			group homomorphisms. It comes with a continuous $G$-action from the
			right by conjugation, i.e., for $\alpha\colon H \to G$ and $g \in G$,
			define \mbox{$(\alpha \cdot g)(h) := g^{-1} \alpha(h) g$}.
		\item The quotient of $\map(H,G)$ by this (usually non-free) $G$-action
			is $\Rep(H,G)$.
		\item Using the bar construction as a model for $EG$, we set
			\[
				\orbgh(H,G) := \map(H,G) \times_G EG := 
					|n \mapsto \map(H,G) \times G^n |
			\]
			as the realization of a simplicial topological space.
	\end{enumerate}
\end{definition}

\begin{rem}
	Gepner and Henriques define $\orbgh(H,G)$ as the \emph{fat} geometric
	realization of the simplicial topological space $\{n \mapsto \map(H,G)
	\times G^n\}$, which is weakly equivalent to our definition.
	We prefer the non-fat realization as it behaves better from a technical
	point of view for our purposes because it commutes with products on the
	nose. We will elaborate on the consequences in
	Remark~\ref{rem:orbghfatunclear}.
\end{rem}

In preparation for later results, we derive an important property of
the topology on $\map(H,G)$.

\begin{lemma}
	[``Close Maps are Conjugate'',
		{\cite[Lemma 38.1]{conner_differentiable_1964}}]
	Let $H$ be a compact Lie group and $G$ a Lie group. For each 
	$\alpha \in \map(H,G)$, there is an open neighborhood $U \subseteq H \times G$
	of $\graph(\alpha)$ such that for each $\beta \in \map(H,G)$ with
	$\graph(\beta) \subseteq U$, $\beta$ is conjugate to $\alpha$.
\end{lemma}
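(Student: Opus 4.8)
The plan is to show that if $\beta\in\map(H,G)$ is sufficiently $C^{0}$-close to $\alpha$, then there is an element $g_{0}\in G$ near the identity $e$ with $\beta(h)=g_{0}^{-1}\alpha(h)g_{0}$ for all $h\in H$; this $g_{0}$ will be produced as a fixed point of an $H$-action on $G$, found by a Riemannian center-of-mass argument. First I would fix the metric. Since $\alpha(H)\subseteq G$ is a compact subgroup, $\operatorname{Ad}(\alpha(H))$ is a compact subgroup of $\operatorname{GL}(T_{e}G)$, so averaging any inner product over it yields an $\operatorname{Ad}(\alpha(H))$-invariant inner product on $T_{e}G$; let $d$ be the distance function of the associated right-invariant Riemannian metric on $G$. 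Every right translation $R_{a}$ is then an isometry of $d$, and each left translation $L_{\alpha(h)}$ with $h\in H$ is an isometry by the $\operatorname{Ad}(\alpha(h))$-invariance; hence for every $h\in H$ and \emph{every} homomorphism $\beta$ the map $c_{h}\colon G\to G$, $c_{h}(g)=\alpha(h)\,g\,\beta(h)^{-1}$, is an isometry of $(G,d)$. A short computation using that $\alpha$ and $\beta$ are homomorphisms shows $h\mapsto c_{h}$ is a left $H$-action on $G$, and $g_{0}$ is a fixed point of it exactly when $\alpha(h)g_{0}=g_{0}\beta(h)$ for all $h$, i.e.\ when $\beta=g_{0}^{-1}\alpha(-)g_{0}$.

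Next I would invoke the theorem on the Riemannian center of mass (Karcher): there is $\rho>0$ such that any Borel probability measure supported in a metric ball of radius $\rho$ in $G$ has a unique center of mass, namely the unique minimizer of $F_{\mu}(g)=\int_{G}d(g,x)^{2}\,d\mu(x)$ over such a ball, which again lies in that ball. I would then fix a sufficiently small $\epsilon>0$ (so that $\rho$-sized balls around $e$ contain all points occurring below) and take $U:=\{(h,g)\in H\times G:\ d(\alpha(h),g)<\epsilon\}$, an open neighborhood of $\graph(\alpha)$. If $\beta\in\map(H,G)$ satisfies $\graph(\beta)\subseteq U$, then $d(\alpha(h),\beta(h))<\epsilon$ for all $h$; since $R_{\beta(h)^{-1}}$ is an isometry, $d(c_{h}(e),e)=d(\alpha(h)\beta(h)^{-1},\,\beta(h)\beta(h)^{-1})=d(\alpha(h),\beta(h))<\epsilon$, so the orbit $\{c_{h}(e):h\in H\}$ lies in $B(e,\epsilon)$. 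Let $\mu$ be the image of the normalized Haar measure of $H$ under the orbit map $h\mapsto c_{h}(e)$; it is supported in $B(e,\epsilon)$ and is $H$-invariant, since the orbit map is $H$-equivariant and Haar measure is left-invariant. Let $g_{0}\in B(e,\epsilon)$ be its center of mass. For each $h_{0}\in H$, invariance of $\mu$ together with $c_{h_{0}}$ being an isometry gives $F_{\mu}(c_{h_{0}}(g))=F_{\mu}(g)$ for all $g$; hence $c_{h_{0}}(g_{0})$ is again a minimizer and therefore equals $g_{0}$ by uniqueness. Thus $g_{0}$ is a fixed point, and $\beta=g_{0}^{-1}\alpha(-)g_{0}$ is conjugate to $\alpha$.

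The hard part is the step that forces the specific choice of metric: one needs the maps $c_{h}$ to be honest isometries, and a left-invariant metric does not accomplish this, whereas a right-invariant metric built from an $\operatorname{Ad}(\alpha(H))$-invariant inner product makes the left translation by $\alpha(h)$ and the right translation by $\beta(h)^{-1}$ isometries at the same time, for every $\beta$. The remaining work is quantitative bookkeeping: the radius $\rho$ from Karcher's theorem, controlled by the injectivity radius and a local upper curvature bound near $e$, dictates how thin the tube $U$ around $\graph(\alpha)$ must be chosen; the compactness of $H$ enters both through the invariant inner product and through the Haar average defining $\mu$.
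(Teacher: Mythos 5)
The paper offers no proof of this lemma; it is quoted directly from Conner and Floyd (Lemma~38.1 of the cited 1964 book), so there is no internal argument to compare against. Your Karcher center-of-mass proof is a valid, self-contained alternative and almost certainly not the route of the classical source, since the barycenter technique postdates it. The pivotal observation — that choosing a right-invariant metric whose inner product at $e$ is $\operatorname{Ad}(\alpha(H))$-invariant makes every $c_h = L_{\alpha(h)}\circ R_{\beta(h)^{-1}}$ an isometry simultaneously for all candidate $\beta$ — is exactly what makes the fixed-point machinery work, and you correctly flag it as the crux. Two points deserve to be made explicit in a full write-up: (a) a right-invariant metric on a Lie group is complete, so Karcher's theorem is indeed available; and (b) after producing the barycenter $g_0$ of the pushed-forward Haar measure inside the uniqueness ball $B(e,\rho)$, one must check that each $c_{h_0}(g_0)$ still lies in that ball before invoking uniqueness to conclude $c_{h_0}(g_0)=g_0$. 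This holds once $\epsilon<\rho/2$, since $d(c_{h_0}(g_0),e)\le d(c_{h_0}(g_0),c_{h_0}(e))+d(c_{h_0}(e),e)<2\epsilon$ by the isometry property and the bound on the orbit of $e$. With that bookkeeping done, the argument is complete and correct.
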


\begin{prop}~
	\label{prop:maphgdecomp}
	\begin{enumerate}
		\item
			Let $H$ be a compact Lie group and $G$ a Lie group. For each $\alpha \in
			\map(H,G)$, there is an open neighborhood $V$ of $\alpha$ in $\map(H,G)$
			such that each $\beta \in V$ is conjugate to $\alpha$.
		\item	
			The quotient topology on $\Rep(H,G) = \map(H,G)/G$ is
			discrete.
		\item
			All $G$-orbits $\alpha G$ are open.  
			In particular, there is a $G$-equivariant decomposition
			\[
				\map(H,G) \cong \coprod_{i \in I} \alpha_i G
			\]
			where $\{\alpha_i\}_{i \in I}$ is a choice of representatives
			of equivalence classes in $\Rep(H,G)$.
	\end{enumerate}
\end{prop}
\begin{proof}
	\begin{enumerate}
		\item Pick a metric $d_G$ on $G$. As $H$ is compact, the topology on
			$\map(H,G)$ coincides with the one induced by the metric $d$ which
			is defined as follows:
			\[
				d(\alpha,\beta) := \max_{h \in H} d_G(\alpha(h),\beta(h)).
			\]

			Fix $\alpha \in \map(H,G)$ and choose an open $U$ as in the
			previous Lemma. For each $h \in H$, pick an open neighborhood $U_h
			= A_h \times B_{\epsilon(h)} (\alpha(h))$ of $(h,\alpha(h))$
			inside $U$ where $B_\epsilon(\alpha(h))$ denotes an
			$\epsilon$-ball around $\alpha(h)$ with respect to $d_G$. By
			replacing $A_h$ with $A_h \cap \alpha^{-1}
			(B_{\nicefrac{\epsilon(h)}{2}}(\alpha(h)))$, we may assume without loss of
			generality that $\alpha(A_h) \subseteq
			B_{\nicefrac{\epsilon(h)}{2}}(\alpha(h))$.

			The $A_h$ cover the compact space $H$. Therefore, we find a finite
			set $H_0 \subseteq H$ such that $\{A_h\sth h \in H_0\}$ covers
			$H$. Set $\epsilon := \min_{h \in H_0} \epsilon(h)$ and denote by
			\[
				V = B_{\nicefrac{\epsilon}{2}}(\alpha) = \{ \beta \in \map(H,G)\sth
					d(\alpha,\beta) = \max_{h \in H} d_G (\alpha(h),\beta(h))
					< \nicefrac{\epsilon}{2} \}
			\]
			the $\nicefrac{\epsilon}{2}$-ball around $\alpha$ in $\map(H,G)$.

			Pick $\beta \in V$. We claim that $\graph(\beta) \subseteq U$. Let
			$h \in H$, then $h \in A_{h^\prime}$ for some $h^\prime \in H_0$.
			Also, $d_G (\alpha(h),\beta(h)) < \nicefrac{\epsilon}{2} \leq
			\nicefrac{\epsilon(h^\prime)}{2}$
			because $\beta \in V$. As $\alpha(A_{h^\prime})$ is contained in 
			$B_{\nicefrac{\epsilon(h^\prime)}{2}} (\alpha(h^\prime))$, we obtain
			$d_G(\alpha(h^\prime),\alpha(h)) <
			\nicefrac{\epsilon(h^\prime)}{2}$. By the triangle inequality,
			\[
				d_G (\alpha(h^\prime),\beta(h)) \leq
				d_G (\alpha(h^\prime),\alpha(h)) + d_G (\alpha(h),\beta(h))
				< \nicefrac{\epsilon(h^\prime)}{2} +
				\nicefrac{\epsilon(h^\prime)}{2} = \epsilon(h^\prime).
			\]
			Therefore, $(h,\beta(h)) \in
			A_{h^\prime} \times B_{\epsilon(h^\prime)} (\alpha(h^\prime))
			\subseteq U$, and $\graph(\beta) \subseteq U$. This  
			implies that $\beta$ is conjugate to $\alpha$ by the choice of
			$U$.
		\item The quotient projection $\map(H,G) \to \Rep(H,G) = \map(H,G) /
			G$ is open. For $\alpha \in \map(H,G)$, an open neighborhood $V$
			as in the previous part is sent to the singleton $\{[\alpha]\}$.
			Hence, all singletons are open in $\Rep(H,G)$.
		\item As each $[\alpha] \in \Rep(H,G)$ is open, so is its preimage
			$\alpha G$. The $\alpha G$ are compact, too, as they are the images
			of the compact sets $\{\alpha\} \times G$ under the action map
			$\map(H,G) \times G \to \map(H,G)$. Since $\map(H,G)$ is
			metrizable, it is Hausdorff, and the compact sets $\alpha G$ must
			be closed. Choosing $\{\alpha_i\}_{i \in I}$ as in the statement of
			this proposition, this implies that the canonical map
			\[
				\coprod_{i \in I} \alpha_i G \to \map(H,G)
			\]
			is a homeomorphism. The $G$-action on $\map(H,G)$ restricts to
			$G$-actions on the orbits $\alpha_i G$, and the canonical map is
			$G$-equivariant with respect to these actions.
	\end{enumerate}
\end{proof}

Of course, the above decomposition always exists on the set level. The new
insight is its compatibility with the topology on $\map(H,G)$. This allows us
to determine the weak homotopy type of $\orbgh(H,G)$:

\begin{prop}
	\label{prop:weakhtptypeoforbgh}
	Let $\{\alpha_i\}_{i \in I}$ be a choice of representatives of equivalence
	classes in $\Rep(H,G)$. Then the weak homotopy type of $\orbgh(H,G) =
	\map(H,G) \times_G EG$ is
	\[
		\coprod_{i \in I} B C (\alpha_i)
	\]
	where $C(\alpha_i) \subseteq G$ denotes the centralizer subgroup of the image of
	$\alpha_i\colon H \to G$.
\end{prop}
\begin{proof}
	We follow the argumentation in~\cite[Remark 2.2.1]{rezk_global_2014}: By
	Proposition~\ref{prop:maphgdecomp}.(iii), we have a $G$-equivariant
	decomposition $\map(H,G) \cong \coprod_{i \in I} \alpha_i G$. As the
	functor $- \times_G EG$ commutes with coproducts in the category of
	$G$-spaces, we obtain
	\[
		\orbgh(H,G) = \map(H,G) \times_G EG \cong 
		\coprod_{i \in I} \left( \alpha_i G \times_G EG \right),
	\]
	and it is left to verify that $\alpha_i G \times_G EG$ is a $B
	C(\alpha_i)$. Note that $\alpha_i G \cong C(\alpha_i) \backslash G$
	because $C(\alpha_i)$ is the stabilizer of $\alpha_i$ with
	respect to the $G$-action on $\map(H,G)$.

	The realization of the simplicial space $EG_\bullet$, given by
	$EG_n = G \times G^n$, is the space $EG$. It has a free $G$-action which
	is the realization of the $G$-action on $EG_\bullet$ through the first
	factor of the product. The restriction of this action to $C(\alpha_i)
	\subseteq G$ is free, so $C(\alpha_i) \backslash EG$ is a $B C(\alpha_i)$.

	We claim that $C(\alpha_i) \backslash EG$ is exactly $\alpha_i G \times_G
	EG = C(\alpha_i) \backslash G \times_G EG$. Restricting the $G$-action on
	$EG_\bullet$ to $C(\alpha_i)$ yields a $C(\alpha_i)$-action on
	$EG_\bullet$. The quotient by this action is the simplicial topological
	space with $n$-th level $C(\alpha_i) \backslash G \times G^n$ whose
	realization is precisely $C(\alpha_i) \backslash G \times_G EG$. Taking
	quotients by $C(\alpha_i)$-actions commutes with realization, so
	\[
		C(\alpha_i) \backslash G \times_G EG  = 
		|n \mapsto C(\alpha_i)\backslash G \times G^n| =
		|C(\alpha_i)\, \backslash\, EG_\bullet| \cong 
		C(\alpha_i) \backslash EG.
	\]
	Therefore, $C(\alpha_i) \backslash G \times_G EG$ is a $B C(\alpha_i)$,
	concluding the proof.
\end{proof}

\subsection{A Reminder on \texorpdfstring{$\orbs(H,G)$}{\orbsPDF(H,G)}}
\label{subsec:orbsrem}
We briefly recall some definitions from \cite{schwede_orbispaces_2017}:

\begin{definition}~
	\label{def:orbsgeneral}
	\begin{enumerate}
		\item For real inner product spaces $V,W$ of finite or countably
			infinite dimension, $\L(V,W)$ is the space of linear isometric
			embeddings topologized as in Observation~\ref{obs:Ltopology}.
			
			In the special case $V=W=\ri$, we abbreviate $\cL := \L(\ri,\ri)$
			and observe that composition endows $\cL$ with the structure
			of a unital topological monoid.
		\item A compact subgroup $G \subseteq \cL$ is a
			\emph{universal subgroup}
			\cite[Definition 1.4]{schwede_orbispaces_2017} if
			\begin{enumerate}
				\item it admits a necessarily unique structure of a Lie group,
				\item the induced orthogonal $G$-representation on $\ri$,
					denoted by $\ri_G$, is a complete $G$-universe, i.e., 
					every finite-dimensional $G$-representation embeds into
					$\ri_G$ via a $G$-equivariant linear isometry.
			\end{enumerate}
		\item If $H,G$ are such universal subgroups of $\cL$, we denote by
			$E(H,G) = \L(\ri_G,\ri_H)$ the right $H \times G$-space with
			underlying space $\cL = \L(\ri,\ri)$ and 
			\[
				(f \cdot (h,g))(x) := h^{-1} \cdot f (gx)
			\]
			for $f \in \L(\ri,\ri)$, $(h,g) \in H \times G$, and $x \in \ri$,
			using the $G$- and $H$-actions on $\ri$.
			
			Finally \cite[Definition~2.1]{schwede_orbispaces_2017},
			\[
				\orbs(H,G) := (\L(\ri_G,\ri_H) /G)^H = (E(H,G)/G)^H.
			\]
	\end{enumerate}
\end{definition}

\begin{rem}
	\phantomsection 
	\label{rem:compunivareall}
	\begin{enumerate}
		\item The concept of universal subgroups of $\cL$ from
			Definition~\ref{def:orbsgeneral} encompasses all compact Lie
			groups in the sense that isomorphism classes of compact Lie groups
			are in bijection with conjugacy classes of universal
			subgroups of $\cL$~\cite[Proposition~1.5]{schwede_orbispaces_2017}.
		\item An alternative definition of $\orbs(H,G)$ can be given by the space
			of $\cL$-equivariant maps
			\[
				\map_\cL (\cL/H,\cL/G),
			\]
			see also~\cite[Definition~2.1]{schwede_orbispaces_2017}.
			The evaluation at $[\id_{\ri}] \in \cL/H$ induces a
			homeomorphism from this space to our definition of $\orbs(H,G)$.
	\end{enumerate}
\end{rem}

The crucial property of $E(H,G)$, allowing for an identification of the
weak homotopy type of $\orbs(H,G)$, is its universality with respect to
\emph{graph subgroups}. 

\begin{definition}
	Let $H,G$ be compact Lie groups. 
	\begin{enumerate}
		\item The \emph{family of graph subgroups}
			(\cite[Definition 1.1.25]{schwede_global_2017})
			is defined to be
				\begin{align*}
					\cF (H,G) := \{&\graph(\alpha) \subseteq H \times G
						\sth \alpha\colon L \to G \text{ a Lie group} \\
					&\quad \text{homomorphism for some closed }
						L \leq H \}.
				\end{align*}
		\item If $E$ is an $H \times G$-space and $\alpha\colon L \to G$, $L \leq
			H$ closed, then we set $E^\alpha := E^{\graph(\alpha)}$ and, as
			in Proposition~\ref{prop:weakhtptypeoforbgh}, we let
			$C(\alpha) \subseteq G$ denote the centralizer of the image of
			$\alpha$.
	\end{enumerate}
\end{definition}
\begin{prop}[{\cite[Proposition~1.1.26.(i)]{schwede_global_2017} and
	\cite[Proposition~A.10]{schwede_orbispaces_2017}}]
	\label{prop:ehgunivspace}
	Given universal subgroups $H,G$ of $\cL$, the $H \times G$-space
	$E(H,G)$ is a universal space for the family $\cF(H,G)$, i.e., for $K \le
	H \times G$,
	\[
		E(H,G)^K \simeq
		\begin{cases}
			\ast & \text{if } K \in \cF(H,G), \\
			\emptyset & \text{if } K \notin \cF(H,G).
		\end{cases}
	\]
\end{prop}
\begin{prop}[{\cite[Proposition 1.5.12.(i)]{schwede_global_2017}}]
	\label{prop:orbsiscoproduct}
	For any choice of representatives
	$\{\alpha_i\}_{i \in I}$ of
	equivalence classes in $\Rep(H,G) = \map(H,G)/G$, the canonical
	map
	\[
		\lambda\colon \coprod_{i \in I} E(H,G)^{\alpha_i} / C(\alpha_i)
		\to (E(H,G)/G)^H
	\]
	is well-defined and a homeomorphism. Furthermore, $E(H,G)^{\alpha_i} /
	C(\alpha_i)$ is a $BC(\alpha_i)$.
\end{prop}

\begin{rem}~
	\label{rem:orbshgmisc}
	\begin{enumerate}
		\item Therefore, the weak homotopy type of $\orbs(H,G)$ is $\coprod_{i
			\in I} BC(\alpha_i)$ (see also~\cite[Remark
			2.2]{schwede_orbispaces_2017}). This agrees with the weak
			homotopy type of $\orbgh(H,G)$ by
			Proposition~\ref{prop:weakhtptypeoforbgh}.
		\item Of course, the map $\lambda$ from before depends on the groups
			$H,G$ and should be called $\lambda_{H,G}$. However, in order to
			reduce notational clutter, we chose to only decorate those maps
			with subscripts that will be considered again for varying
			groups $H,G$ as we discuss the composition law in the next
			section (such as the map $\widetilde{p}_{H,G}$ from
			Definition~\ref{def:ethg}). Henceforth, this principle will be
			applied without further remarks.
	\end{enumerate}
\end{rem}

\subsection{The Intermediate Mapping Space
	\texorpdfstring{$\orbh(H,G)$}{\orbhPDF(H,G)}}
	\label{subsec:intermediatemappingspace}

In order to compare $\orbgh(H,G)$ to $\orbs(H,G)$, we introduce a fattened up
version of $\map(H,G)$ that incorporates $E(H,G)$. Its homotopy quotient will
be $\orbh(H,G)$.

\begin{convention}
	For the remainder of this section, let $H$ and $G$ be universal
	subgroups of $\cL$ unless otherwise stated.
\end{convention}

\begin{definition}
	\label{def:ethg}
	The $G$-space $\et(H,G)$ has underlying set
	\[
		\{ (\alpha,x) \in \map(H,G) \times E(H,G)\sth x \in E(H,G)^\alpha
		\} 
	\]
	topologized as a subspace of $\map(H,G) \times E(H,G)$ and equipped with
	the diagonal action. It comes with a canonical projection map
	$\widetilde{p}_{H,G}\colon \et(H,G) \to \map(H,G)$.
\end{definition}
\begin{rem}
	\label{rem:ethgcgwh}
	Lemma~\ref{lem:ethgtopology} will show that this is a CGWH space.
\end{rem}
\begin{lemma}
	\label{lem:etactionwelldef}
	The $G$-action on $\et(H,G)$ is well-defined.
\end{lemma}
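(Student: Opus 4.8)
The plan is to verify directly that the diagonal $G$-action on $\map(H,G) \times E(H,G)$ restricts to an action on the subset $\et(H,G)$, i.e., that if $(\alpha, x)$ satisfies $x \in E(H,G)^\alpha$, then for every $g \in G$ the pair $(\alpha \cdot g, x \cdot g)$ again satisfies $x \cdot g \in E(H,G)^{\alpha \cdot g}$. Since $\et(H,G)$ carries the subspace topology and the action on the ambient product is continuous, restricting it to an invariant subset automatically yields a continuous action, so the only content is this closure condition on the underlying sets.

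First I would unwind the definitions. Recall that $E(H,G) = \L(\ri_G, \ri_H)$ is a right $H \times G$-space with $(f \cdot (h,g))(y) = h^{-1} \cdot f(gy)$, and that for $\alpha \colon H \to G$ the subgroup $\graph(\alpha) \subseteq H \times G$ consists of the pairs $(h, \alpha(h))$. Thus $x \in E(H,G)^\alpha$ means $x \cdot (h, \alpha(h)) = x$ for all $h \in H$, that is, $h^{-1} \cdot x(\alpha(h) y) = x(y)$ for all $h \in H$ and $y \in \ri$. The $G$-action on the product sends $(\alpha, x)$ to $(\alpha \cdot g, x \cdot (e,g))$, where $(\alpha \cdot g)(h) = g^{-1}\alpha(h)g$ and $x \cdot (e,g)$ is the isometry $y \mapsto x(gy)$.

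Then I would carry out the one-line computation showing $x \cdot (e,g)$ is fixed by $\graph(\alpha \cdot g)$. For $h \in H$ we must check $(x \cdot (e,g)) \cdot (h, (\alpha \cdot g)(h)) = x \cdot (e,g)$. Writing this out, $(h, g^{-1}\alpha(h)g)$ acts, and using that the $H\times G$-action is a right action we can regroup: $(e,g)(h, g^{-1}\alpha(h)g) = (h, g \cdot g^{-1}\alpha(h)g) = (h, \alpha(h)g) = (h,\alpha(h))(e,g)$ in $H \times G$. Hence $x \cdot (e,g) \cdot (h, (\alpha\cdot g)(h)) = x \cdot (h,\alpha(h))(e,g) = (x \cdot (h,\alpha(h))) \cdot (e,g) = x \cdot (e,g)$, where the last step uses $x \in E(H,G)^\alpha$. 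This shows $x \cdot (e,g) \in E(H,G)^{\alpha \cdot g}$, so $\et(H,G)$ is a $G$-invariant subspace and the diagonal action is well-defined; continuity and the group axioms are inherited from the ambient product. I do not anticipate a genuine obstacle here — the only mild care needed is to keep track of left versus right actions and to note the conjugation identity $\graph(\alpha \cdot g) = (e,g)^{-1}\graph(\alpha)(e,g)$ inside $H \times G$, which is precisely what makes the fixed-point sets transform correctly under the action.
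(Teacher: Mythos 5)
Your proof is correct and follows exactly the paper's argument: verify $G$-invariance of the subset by the same group computation $(1,g)(h,g^{-1}\alpha(h)g) = (h,\alpha(h)g) = (h,\alpha(h))(1,g)$ inside $H\times G$, and observe continuity is inherited from the ambient diagonal action. Your closing remark about $\graph(\alpha\cdot g) = (e,g)^{-1}\graph(\alpha)(e,g)$ is a clean conceptual restatement of the same computation.
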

\begin{proof}
	Let $\alpha \in \map(H,G)$ and $x \in E(H,G)^\alpha$. We need to show that $xg
	\in E(H,G)^{\alpha g}$. Spelling out the definitions, this amount to verifying
	that for each $h \in H$, the point $xg = x \cdot (1,g)$ is fixed under
	the action of $(h,g^{-1}\alpha(h)g)$. We compute
	\[
		x \cdot (1,g) \cdot (h,g^{-1}\alpha(h)g) =
		x \cdot (h, \alpha(h) g) = 
		x \cdot (h,\alpha(h)) \cdot (1,g) =
		x \cdot (1,g) = xg.
	\]

	The $G$-action is automatically continuous because it is the restriction
	of the continuous diagonal action on $\map(H,G) \times E(H,G)$.
\end{proof}

\begin{prop}
	\label{prop:ptildefiberbundle}
	The map $\widetilde{p}_{H,G}$ is a fiber bundle.
\end{prop}
\begin{proof}
	Let us write $\widetilde{p} = \widetilde{p}_{H,G}$ within this proof.
	By Proposition~\ref{prop:maphgdecomp}, it suffices to show that the
	restriction
	\[
		 \widetilde{p}^{-1} (\alpha G) \to \alpha G
	\]
	is a fiber bundle for any $\alpha \in \map(H,G)$, where $\alpha G
	\subseteq \map(H,G)$ is the $G$-orbit of $\alpha$.

	The canonical map $\Stab (\alpha) \backslash G \to \alpha G$
	is a continuous bijection from a compact space to
	$\alpha G \subseteq \map(H,G)$. Since the space $\map(H,G)$ is metrizable,
	it is Hausdorff and so is its subspace $\alpha G$. Thus, the map is a
	homeomorphism. Therefore, the map $\widetilde{p}^{-1}(\alpha G) \to \alpha
	G$ from before is homeomorphic to the projection map
	\[
		\widetilde{p}_\alpha\colon
		\{ (\overline{g},x) \in (\Stab(\alpha) \backslash G) \times E(H,G)
			\sth x \in E(H,G)^{\alpha g} \} \to \Stab(\alpha) \backslash G.
	\]

	Fix a point $\overline{g} \in \Stab(\alpha) \backslash G$.
	The stabilizer $\Stab(\alpha) \subseteq G$ is a closed subgroup of the Lie
	group $G$. In particular, the quotient map $\pi\colon G \to \Stab(\alpha)
	\backslash G$ has local sections, and we can choose a neighborhood $U$ of
	$\overline{g}$ and a section $s\colon U \to G$. We are now in the
	position to write down a local trivialization of $\widetilde{p}_\alpha$:
	\[
		\def\arraystretch{1.5}
		\begin{array}{ccc}
			\widetilde{p}_\alpha^{-1} (U) = 
				\{ (\overline{h},x) \in U \times E(H,G) \sth
					x \in E(H,G)^{\alpha h} \} &
			\cong &
			U \times E(H,G)^\alpha,  \\
			(\overline{h},x) & \mapsto & (\overline{h},x \cdot
				s(\overline{h})^{-1}), \\
			(\overline{h},x \cdot s(\overline{h})) & \mapsfrom &
				(\overline{h},x).
		\end{array}
	\]
	The maps are continuous because the section $s$ is continuous and it is
	easy to see that they are mutually inverse. It is left to verify that they
	are well-defined. For $(\overline{h},x) \in U \times E(H,G)^\alpha$, we
	have to check that $x \cdot s(\overline{h}) \in E(H,G)^{\alpha h}$. As $s$
	is a section, we can write $s(\overline{h}) = t h$ for some $t \in
	\Stab(\alpha)$. Hence, $x \cdot s(\overline{h}) \in E(H,G)^{\alpha th} =
	E(H,G)^{\alpha h}$. Well-definedness for the other map works in the same
	fashion. Finally, it is evident that the homeomorphisms are compatible
	with the projections to $U$. Therefore, they constitute a local
	trivialization of $\widetilde{p}_\alpha$, concluding the proof.
\end{proof}

\begin{cor}
	The map $\widetilde{p}_{H,G}$ is a weak equivalence.
\end{cor}
\begin{proof}
	By the previous proposition, $\widetilde{p}_{H,G}$ is a Serre fibration.
	Over any point $\alpha \in \map(H,G)$, its fiber is exactly
	$E(H,G)^\alpha$, which is contractible by
	Proposition~\ref{prop:ehgunivspace}.  Consequently, $\widetilde{p}_{H,G}$
	is a weak equivalence.
\end{proof}

\begin{definition}
	\label{def:lhg}
	Using the bar construction as a model for $EG$, we define
	\[
		\orbh(H,G) := \et(H,G) \times_G EG.
	\]
	The map $\widetilde{p}_{H,G}\colon \et(H,G) \to \map(H,G)$ from
	Definition~\ref{def:ethg} induces a map
	\[
		l_{H,G}\colon \orbh(H,G) = \et(H,G) \times_G EG \to \map(H,G) \times_G EG =
		\orbgh(H,G)
	\]
	by passing to homotopy quotients
\end{definition}

\begin{prop}
	\label{prop:lhgwe}
	The map $l_{H,G}$ is a weak equivalence.
\end{prop}
\begin{proof}
	The weak equivalence $\widetilde{p}_{H,G}\colon \et(H,G) \to \map(H,G)$
	induces a levelwise weak equivalence
	\[
		\et(H,G) \times G^n \to \map(H,G) \times G^n
	\]
	between those simplicial topological spaces that realize to the homotopy
	quotients $\et(H,G)
	\times_G EG$ and $\map(H,G) \times_G EG$ respectively.

	The unit map $\ast \to G$ is a Hurewicz cofibration because $G$ is a
	smooth manifold. Therefore, any degeneracy map of the two simplicial
	topological spaces is a Hurewicz cofibration, and both simplicial
	topological spaces are good. Goodness implies properness, and levelwise
	weak equivalences between proper simplicial topological spaces realize to
	weak equivalences. Hence, the induced map $l_{H,G}$ on realizations is a
	weak equivalence.
\end{proof}

\subsection{The Map to \texorpdfstring{$\orbs(H,G)$}{\orbsPDF(H,G)}}
\label{subsec:orbhtoorbs}

To complete our desired zig-zag of mapping spaces, we still have to construct
a map from $\orbh(H,G)$ to $\orbs(H,G)$. At the end of this subsection, we will have
proven that $\et(H,G) / G = \orbs(H,G)$ so that we can use the canonical map
\[
	\orbh(H,G) = \et(H,G) \times_G EG \to \et(H,G) / G = \orbs(H,G).
\]

\begin{prop}
	\label{prop:muhomeo}
	Fix a set $\{\alpha_i\}_{i \in I}$ of representatives of equivalence
	classes in
	$\map(H,G) / G = \Rep(H,G)$. Then the canonical map
	\[
		\mu\colon \coprod_{i \in I} E(H,G)^{\alpha_i} / C(\alpha_i) \to
		\et(H,G)/G,
	\]
	given by its components $\mu_i([x]) = [\alpha_i,x]$, is a homeomorphism.
\end{prop}
\begin{proof}
	We have to show that the components $\mu_i$ are well-defined, implying
	continuity for $\mu$, and that $\mu$ is bijective with continuous inverse.

	For well-definedness, observe that $C(\alpha_i) = \Stab(\alpha_i)$ with
	respect to the action of $G$ on $\map(H,G)$. Pick $g \in \Stab(\alpha_i)$
	and $x \in E(H,G)^{\alpha_i}$. Then $\mu_i([xg]) = [\alpha_i,xg] =
	[\alpha_i g,xg] = \mu_i([x])$. Thus, $\mu_i$ is well-defined, and $\mu$ is
	automatically continuous.

	In order to prove surjectivity, pick $[\alpha,x]$ in the codomain of
	$\mu$. We have $\alpha = \alpha_i g$ for some $i \in I$ and $g \in G$. So,
	$[\alpha,x] = [\alpha_i g,x] = [\alpha_i,x g^{-1}] = \mu_i([x g^{-1}])$ is
	in the image of $\mu$.

	Considering injectivity, suppose that for $i,j \in I$, we have $x \in
	E(H,G)^{\alpha_i}$ and $y \in E(H,G)^{\alpha_j}$ such that $\mu_i([x]) =
	\mu_j ([y])$. This means that $[\alpha_i,x] = [\alpha_j,y]$ so there is $g
	\in G$ such that $(\alpha_i, x) = (\alpha_j g, y g) \in \et(H,G)$. In
	particular, $\alpha_i$ and $\alpha_j$ are in the same orbit and we must
	have $i=j$. Furthermore, $\alpha_i = \alpha_i g$ implies that $g \in
	\Stab(\alpha_i)$. As we have $x=y g$, we obtain $[x] = [y] \in
	E(H,G)^{\alpha_i} / \Stab(\alpha_i) = E(H,G)^{\alpha_i} / C(\alpha_i)$.

	It remains to show that the inverse map is continuous. To this end,
	consider the $G$-equivariant decomposition $\map(H,G) \cong \coprod_i
	\alpha_i G$, see Proposition~\ref{prop:maphgdecomp}. Each component
	$\alpha_i G$ is canonically homeomorphic to $\Stab(\alpha_i) \backslash
	G$. Set
	\[
		\et(H,G)_i := 
		\{(\overline{g},x) \in \Stab(\alpha_i) \backslash G \times E(H,G) \sth
			x \in E^{\alpha_i g} \}.
	\]
	This is a $G$-space via the diagonal action, and we have $\et(H,G) \cong
	\coprod_i \et(H,G)_i$ equivariantly. As coproducts commute with quotients, 
	the codomain of $\mu$
	is a coproduct of the $\et(H,G)_i / G$ with respect to the maps
	\[
		\begin{array}{rccc}
			\iota_i\colon & \et(H,G)_i / G & \to & \et(H,G)/G, \\
			& [\overline{g},x] & \mapsto & [\alpha_i g,x].
		\end{array}
	\]
	A computation reveals that the set-theoretic inverse of $\mu$ has
	components
	\[
		\begin{array}{rccc}
			\xi_i\colon & \et(H,G)_i /G & \to &
				E(H,G)^{\alpha_i} / \Stab(\alpha_i), \\
				& [\overline{g},x] & \mapsto & [x g^{-1}].
		\end{array}
	\]
	with respect to the coproduct decomposition of $\et(H,G)/G$
	from before. To conclude the proof, we need to show that the $\xi_i$ are
	continuous. This will be achieved by identifying their domains
	$\et(H,G)_i/G$ as certain quotients.

	Fix $i \in I$. Since the fixed points $E(H,G)^{\alpha_i}$ are closed in
	$E(H,G)$, the subset $G \times E(H,G)^{\alpha_i} \subseteq G \times
	E(H,G)$ is closed as well. The map $\rho\colon G \times E(H,G) \to G \times
	E(H,G)$, $(g,x) \mapsto (g,x g^{-1})$, is continuous. Hence, $A :=
	\rho^{-1} (G \times E(H,G)^{\alpha_i})$ must be closed. The set $A$ can
	easily be identified as
	\[
		A = \{(g,x) \in G \times E(H,G) \sth x \in E(H,G)^{\alpha_i g} \}.
	\]

	Next, let us consider the quotient map $p$ from $G \times E(H,G)$ to
	$(\Stab(\alpha_i) \backslash G) \times E(H,G)$.  The subset $A$ of $G
	\times E(H,G)$ is $p$-saturated, i.e., $p^{-1}(p(A)) = A$. To see this, we
	must show that $(g,x) \in A$ implies $(gh,x h) \in A$ for any $h \in
	G$. But from $x \in E(H,G)^{\alpha_i g}$, it follows that $x h \in
	E(H,G)^{\alpha_i g h}$ and $(gh,x h) \in A$. Now, since $A$ is closed
	and saturated, the quotient map $p$ restricts to a quotient map
	$p_{|A}\colon A \to p(A)$. Moreover, $p(A) = \et(H,G)_i$. In the
	commutative diagram
	\[
		\begin{tikzcd}
			(g,x) \arrow[r,start anchor=east,mapsto]
				\arrow[d,phantom,"\in" sloped] &
			{[x \cdot g^{-1}]} \arrow[d,phantom,"\in" sloped] \\
			A \dar \rar & E(H,G)^{\alpha_i} / \Stab(\alpha_i) \\
			p(A) \dar \urar &  \\
			p(A) / G \arrow[uur,swap,"\xi_i"]
		\end{tikzcd}
	\]
	the vertical maps are quotient maps, and the horizontal map is continuous.
	Therefore, $\xi_i$ is continuous, and $\mu$ is a homeomorphism because its
	set-theoretic inverse is the coproduct of the continuous maps $\xi_i$.
\end{proof}

\begin{prop}
	\label{prop:orbstriangle}
	Define $\nu_{H,G}\colon \et(H,G)/G \to (E(H,G)/G)^H$ by sending
	$[\alpha,x]$ to $[x]$.
	Then $\nu_{H,G}$ is a well-defined
	homeomorphism and fits into a commutative diagram
	\[
		\begin{tikzcd}
			& \et(H,G) / G \arrow[dd,"\nu_{H,G}"] \\
			\coprod_{i \in I} E(H,G)^{\alpha_i} / C(\alpha_i) 
				\arrow[ur,"\mu","\cong"']
				\arrow[dr,"\lambda"',"\cong"] \\
			& (E(H,G)/G)^H = \orbs(H,G)
		\end{tikzcd}
	\]
	where $\lambda$ is as in Proposition~\ref{prop:orbsiscoproduct}.
\end{prop}
\begin{proof}
	The map $\mu$ is a homeomorphism by Proposition~\ref{prop:muhomeo}, and
	$\lambda$ is a homeomorphism by Proposition~\ref{prop:orbsiscoproduct}.
	Hence, there is a unique homeomorphism $\nu_{H,G}$ making the diagram
	commute, and one readily checks that the formula is correct. 
\end{proof}

We have now completed the necessary preparations to define the map that was
promised in the beginning of this subsection.

\begin{definition}
	\label{def:khg}
	The map $k_{H,G}$ is the composition
	\[
		\begin{tikzcd}[cramped]
			\orbh(H,G) = \et(H,G) \times_G EG \rar &
			\et(H,G) \times_G \ast \cong \et(H,G) / G
				\arrow[r,"\cong","\nu_{H,G}"'] &
			\orbs(H,G).
		\end{tikzcd}
	\]
\end{definition}

\begin{prop}
	\label{prop:khgwe}
	The map $k_{H,G}$ is a weak equivalence.
\end{prop}
\begin{proof}
	We claim that the $G$-action on $\et(H,G)$ is free. Indeed, if
	there were $\alpha,g,x$ such that $(\alpha,x) g = (\alpha,x)$,
	then $xg = x$ would imply that $x \in E(H,G)^{\langle g\rangle}$
	where $\langle g\rangle \subseteq G$ denotes the cyclic group
	generated by $g$.  If $g \neq 1$, then $1 \times \langle g\rangle$
	cannot be a graph subgroup of $H \times G$. As $E(H,G)$ is a
	universal space for the family of graph subgroups, $x \in
	E(H,G)^{\langle g\rangle} = \emptyset$, a contradiction.  Hence,
	$g=1$ and the action is free as desired.
	
	The freeness of the action, Theorem~\ref{thm:freeactionT2space}, and
	Lemma~\ref{lem:ethgtopology} imply that the comparison map between
	the homotopy quotient and the point-set quotient is a weak equivalence.
	Thus, $k_{H,G}$ is a weak equivalence.
\end{proof}

For the reader's convenience, we restate the main result of this section:
There is a zig-zag of weak equivalences
\[
	\begin{tikzcd}
		\orbgh(H,G) &
		\orbh(H,G) \arrow[l,"\simeq","l_{H,G}"'] \arrow[r,"\simeq"',"k_{H,G}"] &
		\orbs(H,G).
	\end{tikzcd}
\]
The map $l_{H,G}$ and its properties have been established in
Subsection~\ref{subsec:intermediatemappingspace} while $k_{H,G}$ has been
discussed in this subsection.

\section{The Zig-Zag of Dwyer-Kan Equivalences}
\label{sec:zigzagdk}

In this section, we will upgrade our results on mapping spaces to Dwyer-Kan
equi\-valences. In particular, we will interpret the spaces $\orbgh(H,G)$,
$\orbh(H,G)$, and $\orbs(H,G)$ as mapping spaces of topologically enriched
categories and verify that the comparison maps from the previous section give
rise to functors sitting in a zig-zag

\[
\begin{tikzcd}
	\orbgh & \orbh \arrow[l,"\simeq"',"l"]
	\arrow[r,"\simeq","k"'] & \orbs
\end{tikzcd}
\]
of Dwyer-Kan equivalences.

\begin{definition}
	\label{def:oborb}
	The topological categories $\orbgh$, $\orbh$, and $\orbs$ all
	have the set of objects
	\[
		\{ G \subseteq \L(\ri,\ri) \sth G \text{ a universal subgroup}\},
	\]
	and mapping spaces as defined (or suggested by notation) in the previous
	section.
\end{definition}

It remains to specify composition laws and to verify that identity morphisms
exist. We will start by reinterpreting the mapping spaces of $\orbh$ and of
$\orbgh$ as realizations of topological groupoids. This allows for a sleek
definition of the respective compositions, and we will also be able to verify
compatibility with the maps $l_{H,G}$ easily.

Afterward, we will recall the composition law on $\orbs$ and check that it is
compatible with the maps $k_{H,G}$.  This will imply that our comparison
zig-zag from the previous section gives rise to a zig-zag of topologically
enriched functors. Recall the following

\begin{definition}
	\label{def:dk}
	Let $\catc$ be topologically enriched category. Then the ordinary
	category $\pi_0 \catc$ has $\ob \pi_0 \catc = \ob \catc$ and
	$(\pi_0 \catc)(c,c^\prime) = \pi_0 (\catc(c,c^\prime))$ with composition
	defined in the obvious way.

	A functor $f\colon \catc \to \catd$ of topologically enriched categories is a
	\emph{Dwyer-Kan equivalence} if the induced functor
	$\pi_0 f\colon \pi_0 \catc \to \pi_0 \catd$ is an equivalence of categories and
	$f$ is a weak equivalence on all mapping spaces.
\end{definition}

As we have already verified that the maps in our desired zig-zag of functors
are weak equivalences on mapping spaces, and since the object functions are
identities, we will immediately be able to deduce that the zig-zag of functors
is a zig-zag of Dwyer-Kan equivalences.

\begin{convention}
	For the remainder of this section, $L,K,H,G$ shall be universal
	subgroups of $\L(\ri,\ri)$ unless otherwise specified.
\end{convention}

\subsection{Compositions on \texorpdfstring{$\orbh$}{\orbhPDF} and on
	\texorpdfstring{$\orbgh$}{\orbghPDF}}
\label{subsec:comporbhorbgh}

Recall that a \emph{topological groupoid} $\cG$ consists of topological spaces
$\cG_0, \cG_1$ together with source and target maps $s,t\colon \cG_1 \to \cG_0$ and
a composition $\circ\colon \cG_1 \tensor[_s]{\times}{_t} \cG_1 \to \cG_0$, subject
to associativity and identity conditions. The category of topological
groupoids is denoted by $\grpdtop$.

\begin{definition}
	Let $G$ be a topological group and $X$ be a right $G$-space. Then the
	\emph{action groupoid $X /\!/ G$} is given by $(X /\!/ G)_0 = X$, $(X /\!/ G)_1
	= X \times G$ with $s(x,g) = x$, $t(x,g) = x \cdot g$ and
	$(x^\prime,g^\prime) \circ (x,g) = (x,g g^\prime)$.

	Given a $G$-equivariant map $f\colon K \to L$ of right $G$-spaces, we
	define a functor
	$f /\!/ G\colon\allowbreak K /\!/ G \to L /\!/ G$ of topological
	groupoids by $(f /\!/ G)_0 = f$ and $(f /\!/ G)_1 = f \times G$.
\end{definition}

Some straightforward computations prove
\begin{prop}
	\label{prop:hquotfromactgroupoid}
	The construction from the previous definition gives rise to a functor $-
	/\!/ G\colon G\text{-}\Top \to \grpdtop$. Furthermore, the following
	diagram commutes
	\[
		\begin{tikzcd}
			G\text{-}\Top \arrow[r,"- /\!/ G"] \arrow[rrr,bend right,"- \times_G EG"] &
			\grpdtop \arrow[r,"N_\bullet"] &
			\Top^\Delta \arrow[r,"|\cdot|"] &
			\Top
		\end{tikzcd}
	\]
	where $N_\bullet$ is the topologically enriched version of the nerve
	functor.
\end{prop}

Thus, we see that the morphism spaces of $\orbh$ and of $\orbgh$ can be
described as realizations of certain action groupoids, namely
\[
	\orbgh(H,G) = |N_\bullet (\map(H,G) /\!/ G)|, \quad
	\orbh(H,G) = |N_\bullet (\et(H,G) /\!/ G) |.
\]

\begin{definition}~
	\label{def:compingroupoids}
	We define two functors of topological groupoids which will yield
	composition laws:
	\begin{enumerate}
		\item The functor
			$\diamond\colon (\map(H,G) /\!/ G) \times (\map(K,H) /\!/ H) \to 
				\map(K,G) /\!/ G$
			is given by composition in level $0$ and by the formula
			\[
				\begin{array}{ccc}
					\map(H,G) \times G \times \map(K,H) \times H & \to 
						& \map(K,G) \times G, \\
					(\beta,g),(\alpha,h) & \mapsto & (\beta \circ \alpha, \beta(h) g)
				\end{array}
			\]
			in level $1$.
		\item The functor
			$\diamond\colon (\et(H,G) /\!/ G) \times (\et(K,H) /\!/ H) \to
				\et(K,G) /\!/ G$
			is defined by
			\[
				(\beta,y),(\alpha,x) \mapsto (\beta \circ \alpha,x \circ y)
			\]
			in level $0$ and by
			\[
				(\beta,y,g),(\alpha,x,h) \mapsto
					(\beta \circ \alpha, x \circ y,\beta(h)g)
			\]
			in level $1$ where $\beta \in \map(H,G)$, $y \in E(H,G)^\beta$,
			$g \in G$, $\alpha \in \map(K,H)$, $x \in E(K,H)^\alpha$, and $h
			\in H$. In both formulae, we use the composition of $\cL$ which is
			the underlying space of both $E(H,G)$ and $E(K,H)$.
	\end{enumerate}
\end{definition}

\begin{rem}
	Denote by $\B G = \ast /\!/ G$ the action groupoid associated to the unique
	action of $G$ on the one-point space $\ast$. Then one can show that
	$\map(H,G) /\!/ G$ is isomorphic to the internal mapping topological groupoid
	$\maptopgrpd(\B H,\B G)$, and the composition $\diamond$ from part (i) of
	the previous definition is precisely the composition coming from this
	enrichment of $\grpdtop$ over itself.
\end{rem}

\begin{prop}
	The maps from the previous definition are functors.
\end{prop}

\begin{proof}
	We have to verify well-definedness and compatibility with composition
	and identities for both candidate functors $\diamond$. We will do this for
	the second one only because the necessary verifications for the first one
	are part of the verifications for the second one.

	In order to check well-definedness in level $0$, we have to
	prove that for $\beta,y,\alpha,x$ as above, $x \circ y \in E^{\beta \circ
	\alpha}$. This means that $x \circ y$ is fixed under the action of
	$\graph(\beta \circ \alpha) \subseteq K \times G$. Pick $k \in K$. We
	would like to show that $(x \circ y)(k,\beta(\alpha(k))) = (x \circ y)$.
	Choose $v \in \ri$. Spelling out the left hand side and evaluating at $v$,
	we obtain
	\[
		(x \circ y)(k,\beta(\alpha(k)) (v) = 
		k^{-1} \cdot (x \circ y)(\beta(\alpha(k)) v) =
		k^{-1} \cdot (x ( y(\beta(\alpha(k))v))).
	\]
	We have $y \in E^\beta$ by assumption and $\alpha(k) \in H$. Thus,
	$(\alpha(k),\beta(\alpha(k))) \in \graph(\beta)$ and $y \cdot
	(\alpha(k),\beta(\alpha(k))) = y$. Concretely,
	\[
		\alpha(k)^{-1} \cdot y(\beta(\alpha(k))v) = y(v),
	\]
	and, therefore, $y(\beta(\alpha(k))v) = \alpha(k) y(v)$. Plugging this
	into the previous computation,
	\[
		(x \circ y)(k,\beta(\alpha(k)))(v) = k^{-1} (x (\alpha(k)y(v))).
	\]
	As $x \in E^\alpha$, we have $x \cdot (k,\alpha(k)) = x$. Evaluating at
	$y(v) \in \ri$ proves that the right hand side becomes $x(y(v))$, and we
	have proven that $x \circ y \in E^{\beta \circ \alpha}$.

	For the well-definedness in level $1$, it is left to verify that the
	morphism of the topological groupoid $\et(K,G) /\!/ G$ specified by $(\beta
	\circ \alpha, x \circ y, \beta(h)g)$ has the correct source and the
	correct target. The source is
	$(\beta \circ \alpha, x \circ y) = s(\beta,y,g) \diamond s(\alpha,x,h)$ as
	desired. The target is $((\beta \circ \alpha) \cdot (\beta(h)g),(x \circ
	y)(\beta(h)g))$ and this should agree with $t(\beta,y,g) \diamond
	t(\alpha,x,h) = (\beta \cdot g, y \cdot g) \diamond (\alpha \cdot h,x \cdot
	h) = ((\beta \cdot g) \circ (\alpha \cdot h),(x\cdot h) \circ (y \cdot
	g))$. One readily calculates that $(\beta \circ \alpha) \cdot (\beta(h)g)
	= (\beta \cdot g) \circ (\alpha \cdot h) \in \map(K,G)$, so the first
	components agree. For the second components, pick $v \in \ri$. We have
	\[
		((x \cdot h) \circ (y \cdot g))(v) = (x \cdot h)(y(gv)) = x (h y(gv)).
	\]
	As before, $y \in E^\beta$, and $h y(gv) = y(\beta(h) gv)$ consequently.
	Plugging this in,
	\[
		((x \cdot h) \circ (y \cdot g))(v) = x(y(\beta(h) gv)) =
		(x \circ y)(\beta(h)gv) = ((x \circ y)(\beta(h)g))(v).
	\]
	Thus, the candidate functor $\diamond$ is compatible with sources and
	targets.

	Given an object $((\beta,y),(\alpha,x))$ in the product groupoid on the
	left hand side, its identity is the morphism
	$((\beta,y,1),(\alpha,x,1))$, and applying $\diamond$ yields $(\beta \circ
	\alpha, x \circ y,1)$ which is the identity of the object $(\beta \circ
	\alpha,x \circ y)$.

	Pick two composable morphisms in $(\et(H,G) /\!/ G) \times (\et(K,H) /\!/ H)$.
	Necessarily, they are of the form $((\beta,y,g),(\alpha,x,h))$ and $((\beta
	\cdot g, y \cdot g, g^\prime),(\alpha \cdot h, x \cdot h,h^\prime))$.
	Their composition inside the source groupoid is
	$((\beta,y,g g^\prime),(\alpha,x,h h^\prime))$ which is mapped to $(\beta
	\circ \alpha,x \circ y, \beta(hh^\prime)gg^\prime)$ by $\diamond$. On the
	other hand, the composition of
	the individual images under $\diamond$ inside the target groupoid is
	\[
		\begin{aligned}
			&((\beta \cdot g) \circ (\alpha \cdot h),(x \cdot h) \circ
				(y \cdot g), ((\beta \cdot g)(h^\prime)) g^\prime)
			\circ
			(\beta \circ \alpha,x \circ y,\beta(h)g) \\
			=& (\beta \circ \alpha,x \circ y,
				\beta(h)g ((\beta \cdot g)(h^\prime)) g^\prime) \\
			=& (\beta \circ \alpha,x \circ y,
			\beta(h)g g^{-1} \beta(h^\prime) g g^\prime),
		\end{aligned}
	\]
	which agrees with $(\beta \circ \alpha,x\circ
	y,\beta(hh^\prime)gg^\prime)$.
\end{proof}
\begin{prop}
	\label{prop:catenrichedingroupoids}
	The composition laws $\diamond$ defined before give rise to the categories
	$\orbgh_\grpd$ and $\orbh_\grpd$ enriched in topological groupoids with
	\[
		\ob \orbgh_\grpd = \ob \orbh_\grpd = \ob \orbgh = \ob \orbh,
	\]
	and
	\[
		\orbgh_\grpd (H,G) = \map(H,G) /\!/ G, \quad
		\orbh_\grpd(H,G) = \et(H,G) /\!/ G.
	\]
	Furthermore, the maps $\widetilde{p}_{H,G}\colon \et(H,G) \to \map(H,G)$
	from Definition~\ref{def:ethg} give rise to a functor
	$p\colon \orbh_\grpd \to \orbgh_\grpd$ which is the identity on objects
	and $p_{H,G} := \widetilde{p}_{H,G} /\!/ G$ on morphism groupoids.
\end{prop}
\begin{proof}
	We have to verify that the composition laws $\diamond$ are associative and
	unital. Again, we will do this for the second case $\orbh_\grpd$ only.

	Unitality means that there is a, necessarily unique, functor $\ast \to
	\orbh_\grpd (G,G)$ for the topological groupoid $\ast$ consisting of one
	object $\ast$ and its identity. Specifying such a functor amounts to the
	selection of an object 
	\[
		\id_G \in \orbh_\grpd (G,G)_0 = \et(G,G) = \{ (\alpha,x) \in \map(G,G)
			\times E(G,G) \sth x \in E(G,G)^\alpha \}.
	\]
	The underlying space of $E(G,G)$ is just $\cL = \L(\ri,\ri)$, and it is
	easily verified that $\id\colon \ri \to \ri \in \cL$ lives in the subspace
	$E^{\id}$. Thus, we claim that $\id_G := (\id,\id)$ yields a unit for
	$\diamond$.

	Let $(\alpha,x) \in \orbh_\grpd (H,G)$. Then $\id_G \diamond (\alpha,x) =
	(\id,\id) \diamond (\alpha,x) = (\id \circ \alpha,x \circ \id) =
	(\alpha,x)$ as desired. Similarly, $(\alpha,x) \diamond \id_H =
	(\alpha,x)$. On the level of morphisms, the functor $\ast \to
	\orbh_\grpd (G,G)$ selects the identity of $\id_G$ which is the morphism 
	$(\id,\id,1) \in \orbh_\grpd (G,G) (\id_G,\id_G) \subseteq \et(G,G) \times
	G$. For a morphism $(\alpha,x,h) \in \orbh_\grpd(H,G)$, we compute
	$(\id,\id,1) \diamond (\alpha,x,h) = (\id \circ \alpha,x \circ
	\id,\id(h)1) = (\alpha,x,h)$ as desired. To verify unitality from the
	right, we have
	$(\alpha,x,h) \diamond (\id,\id,1) = (\alpha,x,\alpha(1)h) = (\alpha,x,h)$
	using the identity of $\id_H$.

	We prove associativity on the level of morphisms only because the
	statement on objects follows from that. Thus, pick elements $(\alpha,x,k) \in
	\orbh_\grpd(L,K)_1$, $(\beta,y,h) \in \orbh_\grpd(K,H)_1$, and
	$(\gamma,z,g) \in \orbh_\grpd(H,G)_1$. We calculate
	\[
		\begin{aligned}
			&\left( (\gamma,z,g) \diamond (\beta,y,h) \right)
				\diamond (\alpha,x,k) \\
			=& (\gamma \circ \beta,y \circ z, \gamma(h)g) \diamond
				(\alpha,x,k) \\
			=& (\gamma \circ \beta \circ \alpha, x \circ y \circ z,
				(\gamma \circ \beta)(k)\gamma(h)g) \\
			=& (\gamma \circ \beta \circ \alpha, x \circ y \circ z,
				\gamma (\beta(k)h)g) \\
			=& (\gamma,z,g) \diamond
				(\beta \circ \alpha,x \circ y,\beta(k)h) \\
			=& (\gamma,z,g) \diamond
				\left( (\beta,y,h)\diamond (\alpha,x,k)\right)
		\end{aligned}
	\]
	concluding the verification of associativity.

	Spelling out the definitions, we see that the map $p_{H,G} =
	\widetilde{p}_{H,G} /\!/ G$ of topological groupoids is given by
	\[
		\begin{array}[]{ccc}
			\orbh_\grpd(H,G)_0 & \to & \orbgh_\grpd(H,G)_0, \\
			(\alpha,x) & \mapsto & \alpha,
		\end{array}
	\]
	\[
		\begin{array}{ccc}
			\orbh_\grpd(H,G)_1 & \to & \orbgh_\grpd(H,G)_1, \\
				(\alpha,x,g) & \mapsto & (\alpha,g) ,
		\end{array}
	\]
	and it is evident that these formulae are compatible with the composition
	laws in $\orbgh_\grpd$ and $\orbh_\grpd$, respectively, as $H,G$ vary.
	Therefore, the maps $p_{H,G}$ assemble into a functor
	$p\colon \orbh_\grpd \to \orbgh_\grpd$ as claimed.
\end{proof}

\begin{prop}
	\label{prop:lisdkeq}
	Applying the functor $|\cdot| \circ N_\bullet$ as in
	Proposition~\ref{prop:hquotfromactgroupoid} to the composition laws from
	Definition~\ref{def:compingroupoids} gives rise to categories $\orbh$ and
	$\orbgh$ enriched in topological spaces.

	Moreover, the functor $p$ from
	Proposition~\ref{prop:catenrichedingroupoids} yields a topologically
	enriched functor $l\colon \orbh \to \orbgh$ whose mapping space components
	$l_{H,G}\colon \orbh(H,G) \to \orbgh(H,G)$ are exactly the maps
	$l_{H,G}$ from Definition~\ref{def:lhg}. In particular, the functor $l$ is
	a Dwyer-Kan equivalence.
\end{prop}
\begin{proof}
	Both $|\cdot|$ and $N_\bullet$ are strongly monoidal. This is
	straightforward for the functor $N_\bullet$. A reference for the
	monoidality of $|\cdot|$ is \cite[Corollary 11.6]{may_geometry_1972}.
	
	Therefore, applying these functors to the individual mapping spaces of
	$\orbh_\grpd$ and $\orbgh_\grpd$ and keeping the object sets the same
	turns these categories enriched over topological groupoids into categories
	$\orbh$ and $\orbgh$ enriched over topological spaces.

	We have $|N_\bullet \orbh_\grpd(H,G)| = |N_\bullet (\et(H,G) /\!/ G)| =
	\et(H,G) \times_G EG$ and, similarly, $|N_\bullet \orbgh_\grpd(H,G)| =
	\map(H,G) \times_G EG$ by
	Proposition~\ref{prop:hquotfromactgroupoid}. Hence, the mapping spaces
	$\orbh(H,G)$ and $\orbgh(H,G)$ agree with our definitions from
	Section~\ref{sec:intermmappingspace}. 

	Applying $|\cdot| \circ N_\bullet$ to $p$ yields a functor
	$l\colon \orbh \to \orbgh$. As the mapping groupoid components
	$p_{H,G}\colon \orbh_\grpd (H,G) \to \orbgh_\grpd (H,G)$ of $p$ come from
	maps $\widetilde{p}_{H,G}$ of $G$-spaces, we may apply
	Proposition~\ref{prop:hquotfromactgroupoid} again and deduce that the
	mapping space components of $l = |N_\bullet (p)|$ are $
	|N_\bullet(p)|_{H,G} = | N_\bullet (p_{H,G}) | = \widetilde{p}_{H,G}
	\times_G EG$ which agree with the maps $l_{H,G}$ from
	Definition~\ref{def:lhg}. These are weak equivalences by
	Proposition~\ref{prop:lhgwe}. Consequently, $l$ is a Dwyer-Kan
	equivalence.
\end{proof}

\begin{rem}
	\label{rem:orbghfatunclear}
	In \cite[Remark 4.3]{gepner_homotopy_2007}, Gepner and Henriques choose
	$\orbgh(H,G)$ to be the \emph{fat} geometric realization $||N_\bullet
	(\map(H,G) /\!/ G)||$.  Let us denote this space by
	$\orbgh_{||\cdot||}(H,G)$ for the moment.
	For any good simplicial space $X$, the canonical map $||X|| \to |X|$ is a
	homotopy equivalence, see~\cite[Proposition
	A.1.(iv)]{segal_categories_1974}. In particular, $\orbgh_{||\cdot||}(H,G)
	\to \orbgh(H,G)$ is a homotopy equivalence.
	
	The composition in
	$\orbgh_{||\cdot||}$ depends on a choice of maps
	$i\colon ||X \times Y || \to ||X|| \times ||Y||$ and
	$r\colon ||X|| \times ||Y|| \to ||X \times Y ||$ such that
	$||X \times Y||$ is a retract of $||X|| \times ||Y||$ via these maps and
	such that certain associativity conditions hold, see \cite[Remarks 2.23
	and 4.3]{gepner_homotopy_2007}.
	If one chooses these maps in such a way that the diagram
	\[
		\begin{tikzcd}
			{||X|| \times ||Y||} \arrow[r,"r","\simeq"'] \dar{\simeq} &
			{||X \times Y||} \dar{\simeq} \\
			{|X| \times |Y|} \arrow[r,"\cong"] &
			{|X \times Y|}
		\end{tikzcd}
	\]
	commutes, then one obtains a Dwyer-Kan equivalence
	$\orbgh_{||\cdot||} \to \orbgh$.
\end{rem}
\subsection{Compatibility with the Composition on
	\texorpdfstring{$\orbs$}{\orbsPDF}}
\label{subsec:comporbhorbs}

We begin by recalling the composition on $\orbs$. Having established explicit
descriptions of the maps and objects in question, it will be straightforward
to verify that the remaining maps $k_{H,G}$ are compatible with identities and
composition.

\begin{definition}[{\cite[Definition~2.1]{schwede_orbispaces_2017}}]
	The topologically enriched category $\orbs$ has objects as in
	Definition~\ref{def:oborb} and mapping spaces $\orbs(H,G) = (E(H,G)/G)^H$
	as in Definition~\ref{def:orbsgeneral}. Composition is defined as
	\[
		\begin{array}[]{ccc}
			\orbs(H,G) \times \orbs(K,H) & \to & \orbs(K,G), \\
			([y] , [x]) & \mapsto & [x \circ y].
		\end{array}
	\]
	The identity morphism in $\orbs(G,G)$ is given by $[\id_{\ri}]$, the class
	of the neutral element of the topological monoid $\cL = \L(\ri,\ri)$,
	which is the underlying space of $\et(G,G)$.
\end{definition}

\begin{prop}
The maps $k_{H,G}\colon \orbh(H,G) \to \orbs(H,G)$ from
Definition~\ref{def:khg} give rise to a functor $k\colon \orbh \to \orbs$ that is
the identity on objects.
\end{prop}
\begin{proof}
	Recall that $k_{H,G}$ is defined as the composition	
	\[
		\begin{tikzcd}[cramped]
			\orbh(H,G) = \et(H,G) \times_G EG \rar &
			\et(H,G) \times_G \ast \cong \et(H,G) / G
				\arrow[r,"\cong","\nu_{H,G}"'] &
			\orbs(H,G)
		\end{tikzcd}
	\]
	where $\nu_{H,G}$ is given by $[\alpha,x] \mapsto [x]$, see
	Proposition~\ref{prop:orbstriangle}, and the first map is given by
	$[\alpha,x,u] \mapsto [\alpha,x]$.

	We have used the bar construction model $|N_\bullet (\et(H,G) /\!/ G)|$ for
	$\et(H,G) \times_G EG = \orbh(H,G)$, and an arbitrary element of this space
	is of the form $[\alpha,x,\vec{g},s]$ for $(\alpha,x) \in \et(H,G)$,
	$\vec{g} \in G^n$, $s \in \Delta^n$. The map $\et(H,G) \times_G EG \to
	\et(H,G) / G$ becomes $[\alpha,x,\vec{g},s] \mapsto [\alpha,x]$ under this
	implicit isomorphism. We conclude that the composite map $k_{H,G}$ has the
	formula $[\alpha,x,\vec{g},s] \mapsto [x]$.

	In this description, the composition on $\orbh$, which is given by
	realizing the maps $\diamond$ from Definition~\ref{def:compingroupoids},
	is given by
	\[
		\begin{array}[]{ccc}
			\orbh(H,G) \times \orbh(K,H) & \to & \orbh(K,H), \\
			{[\beta,y,\vec{g},t],[\alpha,x,\vec{h},s]} & \mapsto &
				[\beta \circ \alpha,x \circ y,?]
		\end{array}
	\]
	where the unspecified value $?$ is irrelevant to our following
	considerations.

	Namely, we need to verify that $k$ is compatible with identities and
	composition. We leave the straightforward computation for identities to the
	inclined reader and proceed to the composition. Consider the following
	diagram:
	\[
		\begin{tikzcd}
			\orbh(H,G) \times \orbh(K,H)
				\rar{\circ_{K,H,G}} \dar{k_{H,G} \times k_{K,H}} &
			\orbh(K,G) \dar{k_{K,G}} \\
			\orbs(H,G) \times \orbs(K,H) \rar{\circ_{K,H,G}} & \orbs(K,G)
		\end{tikzcd}
	\]
	Starting with $[\beta,y,\vec{g},t],[\alpha,x,\vec{h},s]$ in the upper
	left corner, we check that it is mapped to $[x \circ y]$ under $k_{K,G}
	\circ (\circ_{K,H,G})$ which is equal to its value under $\circ_{K,H,G}
	\circ (k_{H,G} \times k_{K,H})$. This concludes the proof.
\end{proof}

\begin{cor}
	\label{cor:zigzagdk}
	There is a zig-zag of Dwyer-Kan equivalences
	\[
		\begin{tikzcd}
			\orbgh & \orbh \arrow[l,"\simeq"',"l"]
			\arrow[r,"\simeq","k"'] & \orbs
		\end{tikzcd}
	\]
	as promised at the beginning of this section.
\end{cor}
\begin{proof}
	The functor $l$ exists and is a Dwyer-Kan equivalence by
	Proposition~\ref{prop:lisdkeq}. The existence of $k$ was discussed in the
	previous Proposition. It is a Dwyer-Kan equivalence by
	Proposition~\ref{prop:khgwe}.
\end{proof}

\section{Generalizations of the Main Results}
\label{sec:results}

\subsection{\texorpdfstring{$\cF$}{F}-Relative Versions}
\label{subsec:relative}

At the beginning of the introduction, we have described global homotopy
theory with respect to the class of all compact Lie groups. More generally,
one can study global homotopy theory with respect to a chosen \emph{global
family} $\cF$, i.e., a non-empty class of compact Lie groups which is closed
under isomorphisms, subgroups, and quotients,
see~\cite[Remark~3.11]{schwede_orbispaces_2017}. An $\cF$-global space should
be thought of as a space that is equipped with simultaneous and compatible
actions of all $G \in \cF$, and $\cF$-global equivalences are defined using
$G$-fixed points for $G \in \cF$.

Aside from the class $\cF_\text{all}$ of all compact Lie groups, examples for
global families are given by compact abelian Lie groups, finite groups, or
trivial groups, respectively.

In our setup, the set of objects of both $\orbs$ and $\orbgh$ is given by the
set of all universal subgroups $G \subseteq \L(\ri,\ri)$. While
this set is not isomorphism-closed, it contains a representative for every
isomorphism class of compact Lie groups, see Remark~\ref{rem:compunivareall}.
In particular, we can define versions of $\orbs$ and $\orbgh$ relative to a
global family and generalize our comparison result: Let $\cF$ be a global
family, and define $\obo_\cF$ to be the intersection of $\ob \orbs = \ob \orbgh$
with $\cF$. Note that for the global family of all compact Lie groups, we
have $\obo_{\cF_{\text{all}}} = \ob \orbs = \ob \orbgh$.

\begin{cor}
	Let $\obo$ be a subset of $\ob \orbs = \ob \orbgh$ and denote by
	$\orbs^\obo$ and $\orbgh^\obo$ the full subcategories of $\orbs$ and
	$\orbgh$, respectively, on this set of objects.

	There is a zig-zag of two Dwyer-Kan equivalences, both of which are the
	identity on objects, between $\orbs^\obo$ and $\orbgh^\obo$. This zig-zag
	induces a zig-zag of Quillen equivalences between the projective model
	structures on the enriched presheaf categories $\Pre(\orbs^\obo,\Top)$ and
	$\Pre(\orbgh^\obo,\Top)$.
	In particular, this is true for $\obo = \obo_\cF$ where $\cF$ is a global
	family.
\end{cor}

Given a global family $\cF$, the category $\orbs^{\obo_\cF}$ is called
$\orbs^\cF$ in~\cite[Remark~3.11]{schwede_orbispaces_2017}. There, Schwede
also equips both the category of $\cL$-spaces and the category of orthogonal
spaces with model structures relative to $\cF$ and establishes that these are
equivalent to the projective model structure on $\Pre(\orbs^\cF,\Top)$. This
yields three different models for $\cF$-global homotopy theory.

The setup of Gepner and Henriques is based on the choice of a \emph{family of
allowed isotropy groups} (\cite[Subsection~1.3]{gepner_homotopy_2007}) that
allows for even more general classes of groups which do not necessarily
consist of compact Lie groups only. Any global family $\cF$ is a family of
allowed isotropy groups in their sense, and the category
$\Pre(\orbgh^{\obo_\cF},\Top)$ is a model for orbispaces with
respect to $\cF$ in their setup,
see~\cite[Subsection~4.1]{gepner_homotopy_2007}.
In particular, the cited
paper compares this category with two different models for $\cF$-global
homotopy theory.

\subsection{Monomorphism Variants}

Another variant of orbispaces can be obtained by modifying the morphism
spaces of the orbit category as discussed
in~\cite[Subsection~2.1]{gepner_homotopy_2007}.
The morphism space $\orbgh(H,G)$ is constructed from the space
$\map(H,G)$ of \emph{all} morphisms from $H$ to $G$ by taking the homotopy
quotient by the $G$-action. Gepner and Henriques define another orbit
category by taking the subspace $\mono(H,G) \subseteq
\map(H,G)$ of monomorphisms and using
$\orbgh^\mono(H,G) := \mono(H,G) \times_G EG$ as the space of
maps from $H$ to $G$ in a new index category $\orbgh^\mono$. 

In order to define a monomorphism variant of $\orbs(H,G)$, 
consider a decomposition $\map(H,G) \cong \coprod_{i \in I} \alpha_i G$ as in
Proposition~\ref{prop:maphgdecomp}.
Since the subspace $\mono(H,G) \subseteq \map(H,G)$
is invariant under the $G$-action, there exists a subset $J \subseteq I$
such that $\mono(H,G) \cong \coprod_{i \in J} \alpha_i G$. By
Proposition~\ref{prop:orbsiscoproduct}, 
there is a homeomorphism
\[
	\lambda\colon \coprod_{i \in I} E(H,G)^{\alpha_i} / C(\alpha_i)
	\to (E(H,G)/G)^H = \orbs(H,G), 
\]
and we define $\orbs^\mono(H,G)$ to be the image of $\coprod_{i \in J}
E(H,G)^{\alpha_i} / C(\alpha_i)$ under $\lambda$.
This construction was explained to the author by Stefan Schwede.

It remains to find an analogon of $\orbh(H,G)$ for monomorphisms to complete
the zig-zag. Similarly to Subsection~\ref{subsec:intermediatemappingspace},
set
\[
	\et^\mono(H,G) :=
		\{(\alpha,x) \in \mono(H,G) \times E(H,G) \sth x \in E(H,G)^\alpha \},
\]
and $\orbhmono(H,G) := \et^\mono(H,G) \times_G EG$.

By imposing composition laws in analogy to Section~\ref{sec:zigzagdk}, we
obtain topologically enriched categories $\orbgh^\mono, \orbs^\mono$, and
$\orbhmono$ which have the same set of objects as the non-monomorphism
versions. The proof of Corollary~\ref{cor:zigzagdk} can be adapted
without complications to yield
\begin{cor}
	There is a zig-zag
	\[
		\begin{tikzcd}
			\orbgh^\mono & \orbhmono \arrow[l,"\simeq"']
			\arrow[r,"\simeq"] & \orbs^\mono
		\end{tikzcd}
	\]
	of Dwyer-Kan equivalences, 
	both of which are the identity on objects. This zig-zag
	induces a zig-zag of Quillen equivalences between the projective model
	structures on
	$\Pre(\orbgh^\mono,\Top)$ and $\Pre(\orbs^\mono,\Top)$.
\end{cor}

As in the previous subsection, we can restrict to a fixed subset $\obo$ of objects in
order to account for global families other than $\cF_\text{all}$:

\begin{cor}
	Let $\obo$ be a subset of $\ob \orbs^\mono = \ob \orbgh^\mono$ and denote by
	$\orbs^{\obo, \mono}$ and $\orbgh^{\obo, \mono}$ the full subcategories of
	$\orbs^\mono$ and $\orbgh^\mono$, respectively, on this set of objects.

	There is a zig-zag of two Dwyer-Kan equivalences, both of which are the
	identity on objects, between $\orbs^{\obo, \mono}$ and
	$\orbgh^{\obo, \mono}$. This zig-zag
	induces a zig-zag of Quillen equivalences between the projective model
	structures on the enriched presheaf categories $\Pre(\orbs^{\obo,
	\mono},\Top)$ and $\Pre(\orbgh^{\obo, \mono},\Top)$.
	In particular, this is true for $\obo = \obo_\cF$ where $\cF$ is a global
	family.
\end{cor}

\appendix

\section{Point-Set Topology}
This appendix is concerned with several technical topics which be deemed to be too
distracting for the flow of arguments to be included in the main part of this
exposition.

After stating our conventions on the usage of the adjectives \emph{compact}
and \emph{compactly generated} in Subsection~\ref{subsec:cgwhspaces}, we proceed by
providing helpful statements on the interaction of closed inclusions
with colimits in CGWH spaces in Subsection~\ref{subsec:closedincl}.
Afterward, we will recall \emph{normal spaces} in
Subsection~\ref{subsec:normal} and show that $EG$ is normal as a preparatory
result.

Subsection~\ref{subsec:freeactionhomquot} will show that for a free $G$-space,
which is Hausdorff, the comparison map from the homotopy quotient to the
point-set quotient is a weak equivalence
(Theorem~\ref{thm:freeactionT2space}).  This result is well-known for
manifolds or free $G$-CW-complexes, but we could not find a sufficiently
general statement in the literature that applies to our circumstances.
Therefore, we provide a proof that works for every compactly generated
Hausdorff space.

In the remaining Subsection~\ref{subsec:ethgtop}, we will investigate the
topology of the spaces $\et(H,G)$ in order to finalize some proofs from the
main body of this paper.

\subsection{Compactly Generated Weak Hausdorff Spaces}
\label{subsec:cgwhspaces}
The main body of this paper takes place in the category of compactly generated
weak Hausdorff spaces, also referred to as CGWH spaces. Before we deal with the
necessary point-set arguments, let us make the used terminology precise.

A space is \emph{compact} if every open cover admits a finite subcover. This
is also being referred to as \emph{quasi-compact} in other sources which
include the Hausdorff property into the definition of compactness.

Moreover, a space $X$ is \emph{compactly generated} if, for any subset $Y \subseteq
X$, $Y$ is closed if and only if $u^{-1} (Y)$ is closed for every compact
Hausdorff $K$ and every continuous $u\colon K \to X$. The space $X$ is weak
Hausdorff if for every such $u$ and $K$, the image $u(K)$ is closed
in $X$.

These definitions are taken from~\cite{strickland_category_2009}. Note that
this terminology varies within the literature, and some sources refer to
compactly generated spaces as \emph{$k$-spaces} while they take compactly generated
spaces to be compactly generated weak Hausdorff spaces in our sense.

Note that the property of being compactly generated is a local property, i.e.,
a space is compactly generated if and only if each point has a compactly
generated neighborhood. The property of being weak Hausdorff is not local,
though.

In this paper, we refer to CGWH spaces as \emph{(topological) spaces} and denote
the corresponding category by $\Top$. Within the next subsections, we will
have to deal with their point-set subtleties and cite statements about not
necessarily CGWH spaces. To this end, we will use the term \emph{general
topological space} for a space that is not necessarily CGWH.

The category of CGWH spaces is cocomplete. Limits and colimits may, however,
differ from those computed in the category of general topological spaces. Our
convention is that limits and colimits are computed in CGWH unless it is
explicitly declared that the diagram in consideration lives in the category
of general topological spaces. In this case, limits and colimits are to be
taken in the category of general topological spaces. The latter situation does
only occur within this Appendix.

For the special case of products, we adopt the following notation
from~\cite{strickland_category_2009}: Given two spaces $X$ and $Y$, we denote
by $X \times_0 Y$ the product taken in the category of general topological
spaces, which is not necessarily compactly generated. In contrast, $X \times
Y$ shall denote the product in the category of CGWH spaces.

\subsection{Closed Inclusions and CGWH Colimits}
\label{subsec:closedincl}

We will now shed some light on situations where specific colimits
agree regardless of whether they are computed in CGWH or in the category of
general topological spaces.

\begin{lemma}[{\cite[Section 2.4, p.~59]{hovey_model_1999}}]
	\label{lem:gentopspacessomecolimitsagree}
	The category of topological spaces is cocomplete. In the case of pushouts
	along closed inclusions or transfinite compositions of injections,
	colimits may be computed in the category of general topological spaces
	since they are already weak Hausdorff.
\end{lemma}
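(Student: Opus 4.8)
The plan is to settle the two assertions separately. Cocompleteness of the category of general topological spaces is standard: for a small diagram one takes the colimit of underlying sets and gives it the final topology with respect to the canonical maps from the diagram objects, and then checks the universal property directly.

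For the comparison of colimits, the first point is that a colimit of compactly generated spaces, formed in general topological spaces, is again compactly generated: it carries the final topology with respect to the canonical maps from its (compactly generated) constituents, and a composite of final topologies is final, so a subset is closed as soon as its preimage under every map out of a compact Hausdorff space is closed. Consequently, in both cases under consideration the colimit computed in general topological spaces is automatically compactly generated, and it only remains to prove that it is weak Hausdorff; once that is known, it agrees with the colimit computed in $\Top$.

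Now consider a pushout $P = X \cup_A Y$, formed in general topological spaces, of a closed inclusion $A \hookrightarrow X$ along a map $f\colon A \to Y$, with $X$ and $Y$ in $\Top$. A direct computation shows that $Y \to P$ is a closed embedding: for closed $C \subseteq Y$ the saturation $q^{-1}(q(C)) = C \sqcup f^{-1}(C)$ is closed, since $f^{-1}(C)$ is closed in $A$ and $A$ is closed in $X$. To see that $P$ is weak Hausdorff I would use the characterization that a compactly generated space $Z$ is weak Hausdorff precisely when its diagonal is closed in the compactly generated product $Z \times_k Z := k(Z \times_0 Z)$, where $k(-)$ denotes the coreflection onto compactly generated spaces, together with the fact that $-\times_k(-)$ preserves colimits in each variable and sends closed inclusions to closed inclusions. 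Since $A \hookrightarrow X$ is a closed inclusion, $P \times_k P$ is then obtained by iteratively gluing the four products $X \times_k X$, $X \times_k Y$, $Y \times_k X$, $Y \times_k Y$ along the closed subspaces induced by $A$, and one checks stagewise that $\Delta_P$ pulls back to a closed subset of each of them: in each case the relevant set is the preimage, under a continuous map, of $\Delta_X$ or of $\Delta_Y$ --- namely a diagonal, the graph of $f$, or the relation $\{(x,x') \in A \times A \sth f(x)=f(x')\}$ --- hence closed because $X$ and $Y$ are weak Hausdorff, where one also uses that the sub-products coming from the closed subspace $A$ are themselves closed and that $Y \to P$ is a closed embedding. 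Thus $\Delta_P$ is closed in $P \times_k P$, so $P$ is weak Hausdorff. A transfinite composition $X_0 \hookrightarrow X_1 \hookrightarrow \cdots$ of closed inclusions of weak Hausdorff spaces --- the case actually relevant here, e.g.\ for relative cell complexes --- is handled more directly: each $X_\beta$ is closed in the colimit $X$, closedness in $X$ is detected stagewise, and the image of any map into $X$ from a compact Hausdorff space is contained in some $X_\beta$, where it is closed by weak Hausdorffness of $X_\beta$, hence closed in $X$.

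The main obstacle is precisely the weak Hausdorff verification for the pushout. One cannot argue directly on the quotient $P$, and the tempting approach of splitting a map from a compact Hausdorff space into the part landing in the closed copy of $Y$ (controlled by weak Hausdorffness of $Y$) and the part landing in the complementary open copy of $X \setminus A$ (controlled by weak Hausdorffness of $X$) runs into a circularity, because making the closure of the second part behave well is essentially the statement to be proven. Routing the argument through the closed diagonal $\Delta_P \subseteq P \times_k P$ and the iterated pushout structure of that product circumvents this, at the price of invoking the standard compatibilities of $-\times_k(-)$ and of the coreflection $k(-)$ with closed inclusions and with colimits; the resulting point-set bookkeeping is exactly what \cite{hovey_model_1999} (and, for the CGWH formalism, \cite{strickland_category_2009}) works out.
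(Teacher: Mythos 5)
The paper supplies no proof for this lemma; it is a verbatim citation to Hovey's book, so there is no in-paper argument to compare against. Your reconstruction nonetheless follows the standard route taken in the cited source and in Strickland's notes: a colimit of compactly generated spaces formed in general topological spaces is again compactly generated because final topologies compose; weak Hausdorffness of a compactly generated $Z$ is equivalent to closedness of the diagonal in the compactly generated product $Z\times Z$; and for a pushout $P = X\cup_A Y$ along a closed inclusion $A\hookrightarrow X$ one decomposes $P\times P$ (using that the compactly generated product is a left adjoint in each variable and preserves closed inclusions) into the pieces coming from $X\times X$, $X\times Y$, $Y\times X$, $Y\times Y$, and checks the preimage of $\Delta_P$ on each to be the diagonal, the graph of $f$, or the relation $\{(a,a')\in A\times A\mid f(a)=f(a')\}$, all closed. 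Your preliminary observation that $Y\to P$ is a closed embedding is also correct and is needed for the bookkeeping.

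Two places want sharpening. First, you prove cocompleteness of \emph{general} topological spaces, but the lemma's ``topological spaces'' means CGWH; to close the loop one must note that applying the $k$-ification coreflector and then the maximal weak Hausdorff quotient to the general colimit yields a colimit in CGWH, which you do not address. Second, in the transfinite case the claim that a map from a compact Hausdorff space into $\colim_{\beta}X_\beta$ factors through some stage $X_\beta$ is not automatic; it relies on the transition maps being closed $T_1$ inclusions (so that any infinite set meeting infinitely many new stages would be closed and discrete, contradicting compactness), and for ordinals of uncountable cofinality one additionally argues by transfinite induction. You sensibly restrict to the closed-inclusion case, which is all the paper uses via Lemma~\ref{lem:closedinclusionsclosed}, but it is worth flagging that the bare ``injections'' in the lemma's phrasing is not, by itself, enough to run your factoring argument.
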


\begin{lemma}
	\label{lem:closedinclusionsclosed}
	In the category of topological spaces, closed inclusions are closed under
	pushouts, transfinite compositions, and retracts.
\end{lemma}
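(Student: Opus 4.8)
The plan is to reduce everything to point-set topology in general topological spaces and then verify the three closure properties by hand, using the characterization that a map is a closed inclusion precisely when it is continuous, injective, and closed — equivalently, a homeomorphism onto a closed subspace. This reduction is legitimate: by Lemma~\ref{lem:gentopspacessomecolimitsagree}, pushouts along closed inclusions and transfinite compositions of injections may be computed in the category of general topological spaces, and retracts are a purely categorical notion, so it does not matter in which of the two categories we argue. The closure of closed inclusions under composition, which I will use freely, is immediate from this characterization.

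For pushouts I would take a closed inclusion $i\colon A \to X$ and an arbitrary map $f\colon A \to B$, form $P = X \cup_A B$ as the quotient of $X \sqcup B$ by the relation $i(a) \sim f(a)$, and observe that injectivity of $i$ forces the equivalence classes to be $\{b\} \cup i(f^{-1}(b))$ for $b \in B$ together with the singletons from $X \setminus i(A)$; in particular the structure map $j\colon B \to P$ is injective. Then for the quotient map $q\colon X \sqcup B \to P$ and any $C \subseteq B$ one computes $q^{-1}(j(C)) = i(f^{-1}(C)) \sqcup C$, and when $C$ is closed this set is closed because $f^{-1}(C)$ is closed in $A$ and $i$ carries it to a closed subset of $X$. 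Taking $C = B$ shows that $j(B)$ is closed, and the general case shows that $j$ is a closed map; being continuous and injective, $j$ is thus a closed inclusion.

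For transfinite compositions I would run a transfinite induction showing that every map $X_\beta \to X_\gamma$ in the tower $(X_\beta)_{\beta \le \lambda}$ is a closed inclusion; the case $\beta = 0$, $\gamma = \lambda$ is the assertion. Successor steps are just stability under composition, and at a limit ordinal $\gamma$ I would use that $X_\gamma$ carries the colimit topology, in which a set is closed iff it meets every $X_{\beta'}$ (for $\beta' < \gamma$) in a closed set, together with the elementary fact that the tower maps are injective. Given $C$ closed in $X_\beta$, its image meets $X_{\beta'}$ in a closed set for $\beta' \ge \beta$ by the inductive hypothesis, and in $C \cap X_{\beta'}$ for $\beta' \le \beta$, which is closed since $X_{\beta'}$ is a subspace of $X_\beta$; hence the image of $C$ is closed in $X_\gamma$, so $X_\beta \to X_\gamma$ is a closed inclusion.

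The retract case is the one I expect to require the most care, because none of the relevant subspace inclusions ($r(B) \subseteq A$, $i(A) \subseteq X$, and so on) are simultaneously closed in a way that lets one simply chase images forward. Writing the retract data as $r\colon B \to A$, $s\colon A \to B$, $t\colon Y \to X$, $u\colon X \to Y$ with $sr = \mathrm{id}$, $ut = \mathrm{id}$, $tj = ir$, and $ui = js$, the key observations are: $j$ is injective as a retract of a monomorphism; $t$ is a split monomorphism, hence injective, so $t^{-1}(t(S)) = S$ for subsets $S$ of $Y$; from this $j(B) = t^{-1}(i(A))$, which is closed in $Y$ since $i(A)$ is closed in $X$; and $t, u$ restrict to mutually inverse homeomorphisms $j(B) \cong i(r(B))$ carrying $j(C)$ to $i(r(C))$, where $r(C) = s^{-1}(C) \cap r(B)$ is closed in $r(B)$ whenever $C$ is closed in $B$, so that $i(r(C))$ is closed in $i(r(B))$ because $i$ is an embedding. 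Combining these, $j$ is a homeomorphism onto the closed subspace $j(B)$, hence a closed inclusion, which completes the proof.
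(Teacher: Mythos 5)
Your proof is correct, but it takes a genuinely different route from the paper's. The paper makes the same initial reduction to general topological spaces via Lemma~\ref{lem:gentopspacessomecolimitsagree}, observes that a closed inclusion of CGWH spaces is in particular a closed $T_1$ inclusion of general spaces (since weak Hausdorff implies $T_1$), and then simply cites the proofs of \cite[Lemma~2.4.5 and Corollary~2.4.6]{hovey_model_1999} for all three closure properties. You instead re-prove each closure property from scratch: identifying $q^{-1}(j(C))$ explicitly in the pushout case, running the transfinite induction by hand, and unfolding the retract diagram carefully. What your version buys is transparency and self-containedness, and it also makes visible that the $T_1$ hypothesis appearing in Hovey's formulation plays no role here --- your three arguments use nothing beyond the characterization of a closed inclusion as a continuous injective closed map. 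What the paper's version buys is brevity, at the cost of outsourcing the actual verification. Both approaches are sound; the key facts you isolate ($j(B)=t^{-1}(i(A))$ in the retract case, the formula for $q^{-1}(j(C))$ in the pushout case, and the closed-set criterion for the colimit topology at limit ordinals) are exactly the points one would need to check.
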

\begin{proof}
	As weak Hausdorff spaces are automatically $T_1$, a closed inclusion in the
	category of topological spaces is a closed $T_1$ inclusion in the category of
	general topological spaces. Retracts of maps of topological spaces are also
	retracts of maps of general topological spaces. Also, the relevant pushouts and
	transfinite compositions can be computed in the category of general
	topological spaces.

	The claim follows from the proof of \cite[Lemma 2.4.5]{hovey_model_1999}
	for the cases of pushouts and transfinite compositions and from the proof
	of \cite[Corollary 2.4.6]{hovey_model_1999} for the case of retracts.
\end{proof}

\subsection{Normal Spaces}
\label{subsec:normal}

In this subsection, we will recall the definition of a normal space.
Afterward, we will show that normal spaces are abundant and that the class of
normal spaces, in contrast to the class of Hausdorff spaces, is closed under
certain colimit constructions. As normality implies Hausdorffness for $T_1$
spaces, we will use our results to deduce that $EG$ and $\et(H,G)$ are
Hausdorff spaces (Lemma~\ref{lem:egnormal} and Lemma~\ref{lem:ethgtopology}
respectively).

\begin{definition}
	A general topological space $X$ is \emph{normal} if for
	any two disjoint closed subsets $A,B \subseteq X$, there are open sets
	$U,V \subseteq X$ such that $U \cap V = \emptyset$, $A \subseteq U$, and 
	$B \subseteq V$.
\end{definition}

There is a very useful characterization of normal spaces:

\begin{thm}[Tietze Extension Theorem]
	\label{thm:tietze}
	$X$ is normal if and only if any continuous map $A \to \R$, $A \subseteq
	X$ closed, can be extended to a map $X \to \R$.
\end{thm}

This theorem easily allows us to deduce a few properties of and recognition
criteria for normal spaces:

\begin{lemma}~
	\label{lem:normal}
	\begin{enumerate}
		\item Metrizable spaces and compact Hausdorff spaces are normal.
		\item Closed subspaces of normal spaces are normal.
		\item If $A \subseteq B$ is a closed inclusion, $B$ and $X$ are normal
			spaces, then for any pushout diagram
			\[
				\pushout{A}{X}{B}{Y}{}{}{}{}
			\]
			the space $Y$ is normal.
		\item If $\lambda$ is an ordinal and $(X_\beta)_{\beta < \lambda}$ is
			a $\lambda$-sequence of normal spaces along closed inclusions,
			then the space~$\colim_{\beta < \lambda} X_\beta$ is normal.
		\item Normal $T_1$ spaces are Hausdorff.
		\item Retracts of normal $T_1$ spaces are normal.
		\item Quillen cofibrant topological spaces are normal.
	\end{enumerate}
\end{lemma}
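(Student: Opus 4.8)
The plan is to prove the seven parts in the stated order, since (iii)--(vii) build on earlier items, and to use the Tietze Extension Theorem (Theorem~\ref{thm:tietze}) as the principal tool throughout; gluing Tietze extensions across pushouts and transfinite compositions is the technical core.

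Parts (i), (ii), (v) are standard. For (i), on a metric space the function $x \mapsto d(x,A)/\bigl(d(x,A)+d(x,B)\bigr)$ separates disjoint nonempty closed sets $A,B$, and compact Hausdorff spaces are classically normal. Part (ii) holds because a closed subset of a closed subspace is closed in the ambient space, so disjoint closed subsets of a closed subspace are separated using normality of the ambient space. And (v) is immediate: in a $T_1$ space singletons are closed, so two distinct points form a pair of disjoint closed sets, which normality separates.

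The heart of the lemma is (iii) and (iv), both of which I would prove via the extension criterion of Theorem~\ref{thm:tietze}. For (iii), take a closed $Z\subseteq Y$ and a continuous $f\colon Z\to\R$, and let $Z_X\subseteq X$, $Z_B\subseteq B$ be the (closed) preimages of $Z$ under the structure maps to $Y$. Using normality of $X$, extend the restriction of $f$ to $Z_X$ to a map $g_X\colon X\to\R$. The commutativity of the pushout square, together with the fact that $g_X$ restricts on $Z_X$ to $f$, shows that $g_X$ (pulled back to $A$) and $f$ agree on the closed subset $A\cup Z_B$ of $B$; by normality of $B$, extend the resulting map to $g_B\colon B\to\R$. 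Then $g_X$ and $g_B$ restrict to the same map on $A$, so they glue, along the quotient topology on $Y$, to a continuous map $Y\to\R$ restricting to $f$ on $Z$. Here one uses Lemma~\ref{lem:gentopspacessomecolimitsagree} to form the pushout in general topological spaces and Lemma~\ref{lem:closedinclusionsclosed} to know that $X\to Y$ is a closed inclusion. Part (iv) is the transfinite analogue: recursively extend $f|_{Z\cap X_0}$ over $X_0$; at a successor stage extend over $X_{\beta+1}$ the map agreeing with the previous extension on the closed set $X_\beta$ and with $f$ on $Z\cap X_{\beta+1}$ (these agree on $X_\beta\cap Z$, by the pasting lemma along closed subsets); at a limit stage take the map induced on the colimit; finally assemble a continuous map on $\colim_{\beta<\lambda}X_\beta$, which again may be formed in general topological spaces by Lemma~\ref{lem:gentopspacessomecolimitsagree}, the successive inclusions being closed by Lemma~\ref{lem:closedinclusionsclosed}.

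For (vi) I would note that a retraction exhibits its domain as a subspace of the target (the inclusion is inverse to the restricted retraction), and that in a Hausdorff space --- which (v) supplies --- the image of a retraction is the equalizer of the two relevant self-maps, hence closed; so a retract of a normal $T_1$ space is a closed subspace of it, thus normal by (ii). Finally, for (vii) I would invoke the standard fact (e.g.\ from~\cite{hovey_model_1999}) that a Quillen-cofibrant space is a retract of an $\{S^{n-1}\hookrightarrow D^n\}$-cell complex, i.e.\ a space built from $\emptyset$ by a transfinite sequence of pushouts along the closed inclusions $S^{n-1}\hookrightarrow D^n$ (performed one cell at a time). Since each $S^{n-1}$ and $D^n$ is compact metrizable, hence normal by (i), parts (iii) and (iv) show by induction along the construction that the cell complex is normal; it is $T_1$ because, as an object of $\Top$, it is weak Hausdorff. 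By (vi), the cofibrant space, being a retract of this normal $T_1$ space, is normal. I expect the main obstacle to be the point-set bookkeeping in (iii) and (iv): verifying continuity of the piecewise-defined extensions, checking that they descend along the quotient and colimit topologies, and confirming that all maps in sight are closed inclusions so that Lemmas~\ref{lem:closedinclusionsclosed} and~\ref{lem:gentopspacessomecolimitsagree} apply.
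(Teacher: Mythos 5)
Your proof is correct and follows essentially the same route as the paper: parts (i), (ii), (v) by classical arguments, parts (iii) and (iv) by gluing Tietze extensions along pushouts and transfinite compositions, part (vi) by observing that a retract of a Hausdorff space is a closed (hence, by (ii), normal) subspace, and part (vii) by the retract-of-a-cell-complex characterization of cofibrant objects combined with (i), (iii), (iv), (vi). The paper only spells out (iv) and leaves (ii)--(iii) to the reader, so your fully written-out argument for (iii) is a welcome addition, and the remark that a cell complex is $T_1$ because CGWH spaces are weak Hausdorff correctly fills a small gap the paper leaves implicit.
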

\begin{proof}
	(i) is classical, and (ii)--(iv) can easily be shown using the Tietze Extension
	Theorem. 
	
	For example, let us show (iv):
	Given $(X_\beta)_{\beta < \lambda}$, a closed subspace $A$ of the space
	$\colim_{\beta < \lambda} X_\beta =: X$, and a continuous map
	$f\colon A \to \R$, we wish to find an extension $F\colon X \to \R$ of $f$.
	We start a transfinite recursion in degree $0$. As $A \cap X_0$ is closed
	and $X_0$ is normal, we may find an extension $F_0\colon X_0 \to \R$ of
	$f_{|A \cap X_0}$.  Assume that $\beta = \gamma + 1$ is a successor ordinal and that
	there is an extension $F_\gamma\colon X_\gamma \to \R$ of
	$F_{|A \cap X_\gamma}$.
	Then $X_\gamma \cup (A \cap X_\beta) \subseteq X_\beta$ is closed (because
	the map $X_\gamma \to X_\beta$ has closed image), and the function
	$F_\gamma \cup f_{|A \cap X_\beta}\colon X_\gamma \cup (A \cap X_\beta)
	\to \R$ is continuous (because $X_\gamma \to X_\beta$ is a
	homeomorphism onto its image). Hence, by the normality of $X_\beta$, we
	may extend this function to $F_\beta\colon X_\beta \to \R$. If $\beta$ is a
	limit ordinal and we have already chosen compatible extensions
	$F_\gamma\colon X_\gamma \to \R$, then
	$X_\beta = \colim_{\gamma < \beta} X_\gamma$ and we extend
	$f_{A \cap X_\beta}$ by $\colim_{\gamma < \beta} F_\gamma\colon X_\beta
	\to \R$. Finally, taking
	$F := \colim_{\beta < \lambda} F_\lambda\colon X \to \R$ yields the
	desired extension of $f$.

	Part (v) is immediate because points are closed in $T_1$ spaces. For part (vi), if
	$A \overset{i}\to X \overset{p}{\to} A$ is a retract diagram,
	then $i$ is automatically a homeomorphism onto its image. As $X$ is
	Hausdorff, its retract subspace $i(A)$ is closed. Therefore, it is normal
	by (ii). The last part follows from (i), (iii), (iv), (vi), and
	the characterization of cofibrant objects in cofibrantly generated model
	categories, see~\cite[Lemma~A.4]{koerschgen_dwyer-kan_2017}.
\end{proof}

\begin{lemma}
	\label{lem:egnormal}
	Let $G$ be a compact Lie group. Then $EG$ is normal and Hausdorff.
\end{lemma}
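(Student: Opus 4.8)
The plan is to exhibit $EG$ as an increasing union of normal spaces glued along closed inclusions and then to apply the relevant parts of Lemma~\ref{lem:normal}. Recall that $EG = |EG_\bullet|$ is the geometric realization of the simplicial space $EG_\bullet$ with $EG_n = G \times G^n = G^{n+1}$. Since $G$ is a compact Lie group, every level $EG_n$ is compact Hausdorff, hence normal by Lemma~\ref{lem:normal}.(i); the same holds for $EG_n \times \Delta^n$, being a product of compact Hausdorff spaces, and therefore — by Lemma~\ref{lem:normal}.(ii) — for every closed subspace of it. Moreover, the unit map $\ast \to G$ is a closed inclusion (in fact a Hurewicz cofibration, as $G$ is a smooth manifold, just as in the proof of Proposition~\ref{prop:lhgwe}), so every degeneracy map of $EG_\bullet$ is a closed inclusion and $EG_\bullet$ is a good simplicial space.

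First I would invoke the standard skeletal filtration of the realization of a good simplicial space (see, e.g., \cite[Section~11]{may_geometry_1972} or \cite[Appendix~A]{segal_categories_1974}): writing $\sk_n := \sk_n EG_\bullet$ for the $n$-th skeleton, one has $\sk_0 = EG_0 = G$, and $\sk_n$ is the pushout of the closed inclusion
\[
	(L_n(EG_\bullet) \times \Delta^n) \cup (EG_n \times \partial\Delta^n)
		\hookrightarrow EG_n \times \Delta^n
\]
along the canonical map to $\sk_{n-1}$, where $L_n(EG_\bullet)$ denotes the $n$-th latching object. Here the displayed map is a closed inclusion precisely because $EG_\bullet$ is good, the latching object being a finite union of closed subspaces of the compact Hausdorff space $EG_n$. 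By Lemma~\ref{lem:closedinclusionsclosed}, each structure map $\sk_{n-1} \to \sk_n$ is again a closed inclusion, and $EG = |EG_\bullet| = \colim_n \sk_n$.

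Next I would prove by induction on $n$ that every $\sk_n$ is normal. The base case $\sk_0 = G$ is compact Hausdorff, hence normal. For the inductive step, the source of the displayed closed inclusion is a closed subspace of the normal space $EG_n \times \Delta^n$, hence normal; since $\sk_{n-1}$ is normal by the inductive hypothesis, Lemma~\ref{lem:normal}.(iii) shows that the pushout $\sk_n$ is normal. Consequently $EG = \colim_n \sk_n$ is a countable composition of closed inclusions of normal spaces, so $EG$ is normal by Lemma~\ref{lem:normal}.(iv). Finally, $EG$ is a CGWH space, hence weak Hausdorff and in particular $T_1$, so Lemma~\ref{lem:normal}.(v) yields that $EG$ is Hausdorff.

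The only step requiring genuine care is the bookkeeping around goodness: the assertions that the latching maps $L_n(EG_\bullet) \hookrightarrow EG_n$ are closed inclusions and that the skeletal filtration of the non-fat realization $|EG_\bullet|$ is obtained by the pushouts above. Once this is in place, everything else is a mechanical application of Lemma~\ref{lem:normal} together with Lemma~\ref{lem:closedinclusionsclosed}; in particular the proof works verbatim with $G^{n+1}$ replaced by any compact Hausdorff level, which will be convenient for Lemma~\ref{lem:ethgtopology}.
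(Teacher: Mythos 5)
Your argument is correct and mirrors the paper's proof almost line for line: both proceed via the skeletal filtration of $|EG_\bullet|$, verify that the skeleta are normal by inducting with Lemma~\ref{lem:normal}.(iii) on the standard latching pushouts (justified by goodness/properness of $EG_\bullet$, which in turn comes from $\ast \to G$ being a closed cofibration), pass to the sequential colimit along closed inclusions via Lemma~\ref{lem:normal}.(iv), and finish with Lemma~\ref{lem:normal}.(v) using that CGWH spaces are $T_1$. The only cosmetic difference is that you start the induction at $\sk_0 = G$ while the paper starts at $\sk_{-1} = \emptyset$, and you explicitly spell out why the latching maps are closed inclusions rather than referencing properness from the proof of Proposition~\ref{prop:lhgwe}.
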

\begin{proof}
	The space $EG$ is the geometric realization of the simplicial topological
	space $EG_\bullet$ which is given by $EG_n = G^{n+1}$.  As for every
	geometric realization, $EG$ is the sequential colimit of its skeleta
	$\sk_n EG$, which sit in pushouts
	\[
		\pushout
			{L_n EG_\bullet \times \Delta^n \cup_{L_n EG_\bullet \times \del \Delta^n}
				EG_n \times \del \Delta^n}
			{\sk_{n-1} EG}{EG_n \times \Delta^n}{\sk_n EG}{}{}{}{}
	\]
	where $L_n EG_\bullet \to EG_n$ is the $n$-th latching map. Since
	$EG_\bullet$ is proper, cf.~the Proof of Proposition~\ref{prop:lhgwe}, the
	latching maps are closed Hurewicz cofibrations. The same is true for the
	vertical pushout-product morphism on the left hand side. In
	particular, these pushout-product morphisms are closed inclusions. As
	$EG_n \times \Delta^n = G^{n+1} \times \Delta^n$ and $\sk_{-1}
	EG_\bullet = \emptyset$ are normal, it follows inductively from
	Lemma~\ref{lem:normal} that the $\sk_n EG$ are normal.

	Moreover, the pushout diagram tells us that the morphisms $\sk_{n-1} EG
	\to \sk_n EG$ are closed inclusions, using
	Lemma~\ref{lem:closedinclusionsclosed}. Hence, $EG = \colim_n \sk_n EG$ is
	normal by Lemma~\ref{lem:normal}. As $EG$ is CGWH, it is $T_1$. Thus, by
	Lemma~\ref{lem:normal}, it is Hausdorff.
\end{proof}

\subsection{Homotopy Quotients of Free \texorpdfstring{$G$}{G}-Spaces}
\label{subsec:freeactionhomquot}

The goal of this subsection is proving the following theorem:

\begin{thm}
	\label{thm:freeactionT2space}
	Let $G$ be a compact Lie group and $X$ a compactly generated Hausdorff space,
	endowed with a free right $G$-action. Then the canonical map $X \times_G
	EG \to X/G$ from the homotopy quotient to the quotient is a weak
	equivalence.
\end{thm}

The crucial point in the proof of this theorem is gaining homotopical control
over the quotient maps $X \times EG \to X \times_G EG$ and $X \to X/G$. Let us
begin by citing several facts about $G$-spaces. 

\begin{thm}
	\label{thm:gspacefacts}
	Let $G$ be a compact Lie group acting on a general topological
	space $X$. Denote by $q\colon X \to X/G$ the
	quotient map. Then the following hold:
	\begin{enumerate}
		\item For any closed subset $A \subset X$, the set $A G = \{ag\sth a
				\in A, g \in G\}$ is closed in $X$.
		\item The quotient map $q$ is closed.
		\item Let $A$ be any $G$-invariant subset of $X$. Then the
			birestriction $A \to q(A)$ of $q$ agrees with the quotient map
			$A \to A/G$, i.e., the subspace topology on $q(A)$ is the
			quotient topology of $A/G$.
		\item If $X$ is a CGWH space, then so is $X/G$. Put differently, the
			quotient computed in general topological spaces agrees with the
			quotient taken in CGWH spaces.
		\item If $X$ is Hausdorff, then the quotient map $q$ is proper.
		\item If $X$ is Hausdorff, then so is $X/G$.
		\item Assume that the following two conditions are
			satisfied:
			\begin{enumerate}
				\item For some fixed $H \leq G$, the stabilizer subgroups of
					all points in $X$ are conjugate to $H$.
				\item The space $X$ is Tychonoff, i.e., completely regular and
					Hausdorff.
			\end{enumerate}
			Then $q$ is a fiber bundle with typical fiber $H\backslash G$.
	\end{enumerate}
\end{thm}
\begin{proof}
	Statement (i) is a special case of~\cite[Proposition
	I.3.1.(iii)]{tom_dieck_transformation_1987}, and the second assertion (ii)
	follows from (i) by the definition of the quotient topology. An
	alternative argument can be found in~\cite[Proposition
	B.13.(ii)]{schwede_global_2017}.  Statements (iv) and (iii) are
	precisely~\cite[Proposition B.13, (i) and (iii)]{schwede_global_2017}.

	The crucial step in proving (v) is ensuring that the fibers
	$q^{-1}(q(x))$ are compact. This is rendered possible by the Hausdorff
	property because it implies that, for any $x \in X$, the canonical map
	$\Stab(x) \backslash G \to xG = q^{-1}(q(x))$ is a homeomorphism. A
	full argument can be found in~\cite[Section
	I.3]{tom_dieck_transformation_1987} or~\cite[Section
	I.3]{bredon_introduction_1972}. These two sources also provide a reference
	for (vi). Note that~\cite{bredon_introduction_1972} exclusively works in
	the setting of general topological spaces that are Hausdorff.

	Finally, condition (a) is a rephrasing of the requirement that all orbits
	have type $H\backslash G$, therefore \cite[Theorem
	II.5.8]{bredon_introduction_1972} applies verbatim.
\end{proof}

However, the point-set property of being Tychonoff is very inconvenient from a
homotopy theorist's point of view because one does not seem to have any grasp
on it under limits. Complete regularity means that closed sets can be
separated from points by Urysohn functions. Most limits in CGWH spaces
necessitate usage of the $k$-ification functor, which potentially introduces
more closed sets and, therefore, destroys complete regularity.

On the other hand, compact Hausdorff spaces are automatically Tychonoff, and
the topology on a CGWH space is generated by its compact Hausdorff subspaces.
This will be the main argument in the proof of the following theorem.

\begin{thm}
	\label{thm:quotientmapfibration}	
	Let $G$ be a compact Lie group acting on a compactly generated Hausdorff
	space $X$.  Assume that for some fixed $H \leq G$, all stabilizers
	subgroups of points in $X$ are conjugate to $H$. Then the quotient map
	$q\colon X \to X / G$ is a fibration.
\end{thm}
\begin{proof}
	The topology on $X$ is generated by its compact subspaces, which are
	automatically Hausdorff, in the sense that the canonical map
	\[
		\colim_{K \subseteq X \text{compact}} K \to X
	\]
	is a homeomorphism. For any compact set $K \subseteq X$, there is a
	$G$-invariant, compact superset $KG$. In
	particular, the $G$-invariant, compact sets form a cofinal subcategory of
	the compact sets, and the natural map
	\[
		\colim_{\substack{K \subseteq X \text{compact }\\
			\text{and }G\text{-invariant}}} K \to X
	\]
	is a homeomorphism. This colimit commutes with taking $G$-quotients.
	Hence,
	\[
		X/G = 
		\colim_{\substack{K \subseteq X \text{compact }\\
			\text{and }G\text{-invariant}}} (K/G).
	\]
	
	For any $G$-invariant, compact $K \subseteq X$, the space $K$ is compact
	Hausdorff which implies normal Hausdorff and, consequently, the Tychonoff
	property. The stabilizer subgroups of the $G$-action on $K$ are conjugate
	to a fixed $H$, so we may apply Theorem~\ref{thm:gspacefacts}.(vii) and
	deduce that the quotient map $K \to K/G$ is a fiber bundle and, therefore,
	a fibration. Moreover, note that this quotient map is exactly the birestriction
	$q_{|K}$ of $q\colon X \to X/G$ to $K$ and its image $q(K) = K/G$,
	see Theorem~\ref{thm:gspacefacts}.(iv). 

	Having made the necessary preparations, let us consider a
	generating acyclic cofibration of topological spaces
	$i\colon A \hookrightarrow  B$ and a lifting problem
	\[
		\begin{tikzcd}
			A \rar{f} \dar{i} & X \dar{q} \\
			B \rar{g} & X/ G
		\end{tikzcd}
	\]
	As $B$ is compact, its image $g(B) \subseteq X/G$ is
	compact. Since $q$ is proper by Theorem~\ref{thm:gspacefacts}.(v), the
	$G$-invariant subset $K := q^{-1} (g(B))$ is compact, too.
	Also, $g(B) = q
	(q^{-1} (g(B))) = q(K) = K/G$. By corestriction, our lifting problem
	reduces to the left-hand square of
	\[
		\begin{tikzcd}[column sep=large]
			A \rar{f} \dar{i} & K \dar{q_{|K}} \rar & X \dar{q} \\
			B \rar{g} \urar[dashed] & K/G \rar & X/G
		\end{tikzcd}
	\]
	The middle vertical map $q_{|K}$ is a fibration by our previous
	considerations, and we may find a lift as indicated. Postcomposing with
	the inclusion $K \hookrightarrow X$ immediately solves our original
	lifting problem.
\end{proof}

\begin{rem}
	In the previous proof, the Hausdorff property for $X$ ensures that the
	space $K = q^{-1} (g(B))$ is compact Hausdorff. Compactness follows from
	the properness of $q$, which necessitates the Hausdorff property, and
	the Hausdorff property for $K$ itself is inherited from $X$. Despite
	several efforts, we were unable to eliminate the Hausdorff requirement for
	$X$. However, we believe that the statement in its current form may prove
	useful for other applications as compact Lie group actions on Hausdorff
	spaces are ubiquitous.
\end{rem}

Before we proceed to the proof of Theorem~\ref{thm:freeactionT2space}, let us
record some preparations which will help us to establish isomorphisms on
$\pi_0$ and $\pi_1$.

\begin{prop}
	\label{prop:quotientpi0}
	Let $X$ and $G$ be as before. The $G$-action on $X$ induces a
	$\pi_0 (G)$-action on $\pi_0 (X)$, and there is a dashed arrow making the
	following triangle commute: 
	\[
		\begin{tikzcd}
			\pi_0 (X) \rar{\pi_0(q)} \dar{\rho} &
			\pi_0 (X/G) \\
			\pi_0 (X) / \pi_0 (G)\urar[dashed,swap]{\overline{\pi_0(q)}} 
		\end{tikzcd}
	\]
	This arrow is an isomorphism and natural with respect to morphisms of $G$-spaces.
\end{prop}
\begin{proof}
	The $\pi_0 (G)$-action on $\pi_0 (X)$ is well-defined because $\pi_0$
	commutes with products, and $G$-morphisms $X \to Y$ give rise to morphisms
	$\pi_0 (X) \to \pi_0 (Y)$ of $\pi_0(G)$-sets. If it exists, the arrow
	$\overline{\pi_0(q)}$ is unique which implies naturality.
	
	Denote path-components by $[\cdot]$. In order to show that
	$\overline{\pi_0(q)}$ exists, pick $[x] \in \pi_0 (X)$. Then for all
	$[g] \in \pi_0 (G)$, we have $\pi_0 (q) ([x]) = [q(x)] = [q(xg)] =
	\pi_0 (q) ([xg]) = \pi_0 (q) ([x] \cdot [g])$ where $\cdot$ denotes
	the $\pi_0 (G)$-action on $\pi_0 (X)$. Thus, $\pi_0(q)$ factors through
	the quotient map $\rho$.

	If $X = \emptyset$, then both domain and codomain of
	$\overline{\pi_0(q)}$ are one-point sets and the map is an isomorphism.
	Otherwise, the quotient map $q\colon X \to X/G$ is surjective and induces a
	surjection on $\pi_0$. Consequently, the map $\overline{\pi_0(q)}$ is
	surjective as well, and it remains to show that the map is injective.

	Assume that for $x,y \in X$, we have $\overline{\pi_0(q)}(\rho([x])) =
	\overline{\pi_0(q)}(\rho([y]))$, i.e., $\pi_0(q) ([x]) =
	\pi_0(q)([y])$, which can be rephrased as $[q(x)] = [q(y)]$. This
	means that there is a path $\gamma\colon [0;1] \to X/G$ from $q(x)$ to
	$q(y)$. As the quotient map $X \to X/G$ is a fibration, we can lift
	$\gamma$ to a path $\tilde{\gamma}\colon [0;1] \to X$ from
	$\tilde{\gamma}(0) = x$ to some point $\tilde{\gamma}(1)$ with
	$q(\tilde{\gamma}(1)) = q(y)$.
	This implies that there is $g \in G$ with $\tilde{\gamma}(1) = y g$. In
	particular, we have
	\[
		\rho([x]) = \rho([\tilde{\gamma}(1)]) = \rho([yg]) = \rho([y] \cdot
		[g]) = \rho([y])
		\in \pi_0 (X) / \pi_0 (G),
	\]
	and we deduce that $\overline{\pi_0(q)}$ is injective.
\end{proof}

\begin{definition}
	\label{def:fibergroup}
	Let $X$ and $G$ be as before. Assume additionally that $X \neq \emptyset$
	and that the $G$-action on $X$ is free. Picking $x \in X$, denote by
	$\fib_{q(x)} (q)$ the fiber of $q\colon X \to X/G$ over $q(x) \in X/G$.
	We equip this fiber with a group structure using the homeomorphism
	\[
		G \overset{\cong}{\to} \fib_{q(x)}(q),\ g \mapsto x g^{-1}.
	\]
\end{definition}
\begin{rem}
	The assignment $g \mapsto x g$ is a homeomorphism as well and could have
	been used to define the group structure. However, the convention described
	above ensures that the group structure is compatible with the standard
	conventions for homotopy groups. Moreover, note that this group structure
	is natural with respect to $G$-equivariant pointed maps of pointed free
	$G$-spaces.
\end{rem}

\begin{prop}
	\label{prop:delgrouphom}
	Let $X$, $x$, and $G$ be as in Definition~\ref{def:fibergroup}. The
	boundary map
	\[
		\pi_1(X/G,q(x)) \overset{\del}{\to} \pi_0 (\fib_{q(x)}(q))
	\]
	associated to the fibration $q\colon X \to X/G$ is a group homomorphism with
	respect to the group structure which is induced by the group structure
	from Definition~\ref{def:fibergroup} under the functor $\pi_0$.
\end{prop}
\begin{proof}
	We interpret $\pi_1 (X/G,q(x))$ as classes of maps $(D^1,S^0) \to
	(X/G,q(x))$ of pairs of spaces
	and employ the notation $[\cdot]$ for classes in
	$\pi_0$ or $\pi_1$. Let $\alpha_1$ be such a map. As $q$ is a
	fibration, we may find a lift $\widetilde{\alpha_1}\colon D^1 \to X$ such
	that $q \circ \widetilde{\alpha_1} = \alpha_1$ and $\widetilde{\alpha_1}
	(0) = x$.  Then $\del[\alpha_1] = [\widetilde{\alpha_1}(1)] \in
	\fib_{q(x)} (q)$. There is a unique $g \in G$ such that
	$\widetilde{\alpha_1}(1) = x g^{-1}$.

	Given another $\alpha_2\colon (D^1,S^0) \to (X/G,q(x))$, choose a lift
	$\widetilde{\alpha_2}$ as before such that $\del[\alpha_2] =
	[\widetilde{\alpha_2}(1)]$. Let $h \in G$ be the unique element with
	$\widetilde{\alpha_2}(1) = x h^{-1}$. We wish to show that $\del([\alpha_1]
	\cdot [\alpha_2])$ agrees with
	$\del([\alpha_1])\,\cdot\,\del([\alpha_2]) =
	[x g^{-1}] \cdot [x h^{-1}]$ which is $[x (gh)^{-1}] = [x h^{-1} g^{-1}]$ by
	definition of the group structure on $\pi_0(\fib_{q(x)}(q))$.
	
	The product $[\alpha_1] \cdot [\alpha_2]$ in $\pi_1 (X,x)$ is represented
	by the concatenation $\alpha_1 \ast \alpha_2$ with the standard convention
	$(\alpha_1 \ast \alpha_2)(0) = \alpha_1(0)$, $(\alpha_1 \ast \alpha_2)(1)
	= \alpha_2(1)$. Denote by $\widetilde{\alpha_2} \cdot g^{-1}$ the path $t
	\mapsto \widetilde{\alpha_2} (t) \cdot g^{-1}$ from 
	$x g^{-1}$ to $x h^{-1} g^{-1}$.
	Then $\widetilde{\alpha_1}$ and $\widetilde{\alpha_2} \cdot g^{-1}$ are
	concatenable and their concatenation $\gamma := \widetilde{\alpha_1} \ast
	(\widetilde{\alpha_2} \cdot g^{-1})$ satisfies $q \circ \gamma =
	\alpha_1 \ast \alpha_2$ and $\gamma(0) = x$. Hence, $\del([\alpha_1] \cdot
	[\alpha_2]) = \del ([\alpha_1 \ast \alpha_2)] = [\gamma(1)] = 
	[x h^{-1} g^{-1}]$ as desired.
\end{proof}

\begin{proof}[{Proof of Theorem~\ref{thm:freeactionT2space}}]
	In the trivial case $X=\emptyset$, the statement holds. Let us exclude
	this case from the following considerations.

	Observe that $X$ and $EG$ are Hausdorff by assumption and by
	Lemma~\ref{lem:egnormal}, respectively. Therefore, their product $X
	\times_0 EG$ in the category of general topological spaces is Hausdorff.
	As all open sets in $X \times_0 EG$ are also open in its $k$-ification $X
	\times EG = k (X \times_0 EG)$, we deduce that $X \times EG$ is Hausdorff
	as well.

	Denote the projection $X \times EG \to X$ by $\pr$ and pick $(x,e) \in X
	\times EG$. We obtain a commutative diagram of fiber
	sequences:
	\[
		\begin{tikzcd}
			\fib_{q_1(x,e)} (q_1) \rar \dar[swap]{\fib(\pr,\pr/G)} &
			X \times EG \rar{q_1} \dar{\pr} & 
			X \times_G EG \dar{\pr /G} \\
			\fib_{q_2(x)} (q_2) \rar & X \rar{q_2} & X /G
		\end{tikzcd}
	\]
	Using homeomorphisms as in Definition~\ref{def:fibergroup}, we obtain
	a commutative triangle
	\[
		\begin{tikzcd}
			& \fib_{q_1(x,e)}(q_1) = (x,e)G \arrow[dd,"{\fib(\pr,\pr/G)}"] \\
			G/\{1\} \cong G \urar{\cong} \drar[swap]{\cong} \\
			& \fib_{q_2(x)}(q_2) = xG
		\end{tikzcd}
	\]
	Thus, $\fib(\pr,\pr/G)$ is a homeomorphism. Since $EG$ is contractible,
	$\pr\colon X \times EG \to X$ is a weak equivalence.  
	By Theorem~\ref{thm:quotientmapfibration}, the quotient maps $q_1$ and $q_2$
	are fibrations. In the associated long exact sequences on $\pi_\ast$,
	the terms $\pi_0 (\fib_{q_1(x,e)}(q_1))$ and $\pi_0 (\fib_{q_2(x)}(q_2))$
	inherit group structures, and the appropriate boundary maps are group
	homomorphisms, see Definition~\ref{def:fibergroup} and
	Proposition~\ref{prop:delgrouphom}. This is sufficient to deduce that
	$\pr/G$ induces an isomorphism on $\pi_n$ for $n \geq 1$ and all
	basepoints by the four lemmas.

	Finally, the weak equivalence $\pr\colon X \times EG \to X$ is
	$G$-equivariant and induces an isomorphism of $\pi_0 (G)$-sets on $\pi_0$.
	Thus, it yields an isomorphism $\pi_0(\pr)/\pi_0(G)$ from $\pi_0 (X \times
	EG)/\pi_0 (G)$ to  $\pi_0 (X)/\pi_0 (G)$. The diagram
	\[
		\begin{tikzcd}[column sep=5.4em,row sep=large]
			\pi_0 (X \times EG)/\pi_0 (G)
				\arrow[r,"\pi_0 (\pr) / \pi_0 (G)","\cong"']
				\arrow[d,"\overline{\pi_0(q_1)}"',"\cong"] &
			\pi_0 (X) / \pi_0 (G)
				\arrow[d,"\overline{\pi_0(q_2)}","\cong"'] \\
				\pi_0 (X \times_G EG) \rar[swap]{\pi_0(\pr /G)} &
			\pi_0 (X/G)
		\end{tikzcd}
	\]
	commutes, and the vertical arrows are isomorphisms, see
	Proposition~\ref{prop:quotientpi0}. Therefore, the map $\pr /G$ is an
	isomorphism on $\pi_0$, too, and it is a weak equivalence.
\end{proof}

\subsection{The Topology of \texorpdfstring{$\et(H,G)$}{\etPDF(H,G)}}
\label{subsec:ethgtop}

In order to apply Theorem~\ref{thm:freeactionT2space} to $\et(H,G)$, which is
necessary for the proof of Proposition~\ref{prop:khgwe}, we need to verify
that this is a Hausdorff space. Let us start by recalling the topology on
$E(H,G)$.

\begin{obs}
	\label{obs:Ltopology}
	The underlying space of the $(H \times G)$-space $E(H,G)$ is the space of
	linear isometries $\L(\ri,\ri)$, whose topology we will define now.
	First, given finite-dimensional real inner product spaces $V,W$, the space
	$\L(V,W)$ of linear isometric embeddings is topologized by choosing an
	orthonormal basis $(v_1,\ldots,v_k)$ of $V$ and using the bijection
	\[
		\begin{array}[]{ccc}
			\L(V,W) & \to & \sti_k (W), \\
			f & \mapsto & (f(v_1),\ldots,f(v_k))
		\end{array}
	\]
	to the Stiefel manifold of $k$-frames in $W$. This Stiefel manifold is a
	subset of $W^k$ and endowed with the subspace topology.

	Having established the topology on $\L(V,W)$ for $V$ and $W$
	finite-dimensional, we move on to the case where the second argument is of
	countably infinite dimension. For such a $\cW$, we define
	\[
		\L(V,\cW) := \colim_{W \in s(\cW)} \L(V,W)
	\]
	where $s(\cW)$ is the poset of finite-dimensional subspaces of $\cW$. For
	$W \leq W^\prime \in s(\cW)$, the induced map $\L(V,W) \to \L(V,W^\prime)$
	is a closed inclusion.  
	Furthermore, If $W_0 \subseteq W_1 \subseteq \dots \subseteq \cW$ is a
	strictly increasing sequence of finite-dimensional subspaces such that
	$\cup_l W_l = \cW$, it is cofinal, and we have
	\[
		\L(V,\cW) = \colim_{l} \L(V,W_l).
	\]
	As this colimit is taken along closed inclusions, it agrees with the
	colimit computed in general topological spaces
	(Lemma~\ref{lem:gentopspacessomecolimitsagree}).

	It remains to upgrade the first variable to vector spaces of countably
	infinite dimension. For such a vector space $\cV$, we set
	\[
		\L(\cV,\cW) = \lim_{V \in s(\cV)} \L(V,\cW)
	\]	
	where the limit is taken in the category of CGWH spaces. Again, for an
	exhaustive sequence $V_0 \subseteq V_1 \subseteq \dots$ of finite dimensional
	subspaces of $\cV$, we have
	\[
		\L(\cV,\cW) = \lim_l \L(V_l,\cW).
	\]
	By~\cite[Appendix~A]{schwede_orbispaces_2017}, this is the same
	topology as the subspace topology inherited from the inclusion
	$\L(\cV,\cW) \subseteq \map(\cV,\cW)$ into the space of all continuous
	maps.
\end{obs}

\begin{lemma}
	\label{lem:lririT2}
	The space $\L(\ri,\ri)$ is Hausdorff.
\end{lemma}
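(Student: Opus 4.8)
The plan is to exploit the concrete description of the topology on $\L(\ri,\ri)$ given at the end of Observation~\ref{obs:Ltopology}: it is the subspace topology coming from the inclusion $\L(\ri,\ri) \subseteq \map(\ri,\ri)$ into the space of all continuous self-maps of $\ri$. Since every subspace of a Hausdorff space is Hausdorff, it suffices to show that $\map(\ri,\ri)$ is Hausdorff, and for that it is enough that the target $\ri$ itself is Hausdorff.

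To see that $\ri = \colim_n \R^n$ is Hausdorff, note that each $\R^n$ is metrizable and hence normal by Lemma~\ref{lem:normal}.(i), and that the standard inclusions $\R^n \hookrightarrow \R^{n+1}$ are closed. Thus $\ri$ is a colimit of normal spaces along closed inclusions and is therefore normal by Lemma~\ref{lem:normal}.(iv). Being a CGWH space, $\ri$ is $T_1$, so Lemma~\ref{lem:normal}.(v) yields that $\ri$ is Hausdorff.

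Now $\map(\ri,\ri)$ is Hausdorff: given distinct maps $f \neq g$, choose $x \in \ri$ with $f(x) \neq g(x)$ and, using the Hausdorff property of $\ri$, pick disjoint open sets $U \ni f(x)$ and $V \ni g(x)$. The sets $\{h : h(x) \in U\}$ and $\{h : h(x) \in V\}$ are disjoint and contain $f$ and $g$ respectively; they are open because they are among the defining open sets of the compact--open topology (evaluation at the compact singleton $\{x\}$), and that topology is coarser than the one carried by the mapping space $\map(\ri,\ri)$. Hence $\map(\ri,\ri)$ is Hausdorff, and so is its subspace $\L(\ri,\ri)$.

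I do not expect a genuine obstacle here; the proof is essentially bookkeeping in point-set topology, and the only facts one must invoke with care are that $\ri$ is Hausdorff (supplied by Lemma~\ref{lem:normal}) and that a mapping space into a Hausdorff target is Hausdorff. As an alternative that avoids the mapping-space description, one could instead argue through the inductive construction in Observation~\ref{obs:Ltopology}: the finite Stiefel manifolds $\sti_k(W) \subseteq W^k$ are metrizable, hence normal; each $\L(V,\cW) = \colim_l \L(V,W_l)$ is a colimit along closed inclusions, hence normal and therefore Hausdorff by Lemma~\ref{lem:normal}; and each $\L(\cV,\cW) = \lim_l \L(V_l,\cW)$ is the $k$-ification of a subspace of a product of Hausdorff spaces, hence Hausdorff, so in particular $\L(\ri,\ri)$ is Hausdorff.
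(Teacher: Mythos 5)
Your primary argument is correct, but it goes by a genuinely different route than the paper's. The paper works entirely from the inductive construction in Observation~\ref{obs:Ltopology}: it shows each $\L(\R^n,\ri) = \colim_m \L(\R^n,\R^m)$ is normal (colimit of metrizable spaces along closed inclusions), hence Hausdorff, and then observes that $\L(\ri,\ri) = \lim_n \L(\R^n,\ri)$ is a subspace of the product $\prod_n \L(\R^n,\ri)$, which is Hausdorff because $k$-ification refines the topology and hence preserves the Hausdorff property. Your main route instead leans on the cited fact that $\L(\ri,\ri)$ carries the subspace topology from $\map(\ri,\ri)$ and then verifies that a CGWH mapping space with Hausdorff target is Hausdorff, using evaluation at a point and the fact that the $k$-ified compact--open topology is finer than the compact--open topology. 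This is valid, and arguably slicker, but it has a slightly larger implicit dependency footprint: it requires knowing the topology on $\ri$ itself (the colimit topology, which the paper never actually pins down since its proof avoids it) and the precise definition of $\map(\ri,\ri)$ as the $k$-ification of the compact--open topology. The paper's approach sidesteps both of these by never leaving the world of the $\L(V,W)$ building blocks. Your alternative paragraph at the end reproduces the paper's argument essentially verbatim, so you have in fact covered both.

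One small point worth making explicit in your main route: when you say the compact--open topology is coarser than the topology on $\map(\ri,\ri)$, the justification is that in CGWH the internal hom is the $k$-ification of the compact--open topology, and $k$-ification can only add open sets; since Hausdorffness is preserved under passing to a finer topology, the conclusion follows. As stated, a reader has to fill this in.
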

\begin{proof}
	First, the spaces $\L(\R^n,\R^m)$ are normal as they are metrizable.
	Therefore, $\L(\R^n,\ri)$, which is defined as $\colim_m \L(\R^n,\R^m)$,
	is a sequential colimit of normal spaces along closed inclusions. Thus,
	$\L(\R^n,\ri)$ is normal by Lemma~\ref{lem:normal}. The space
	$\L(\R^n,\ri)$ is $T_1$, too, so it is Hausdorff.

	The limit $\L(\ri,\ri) = \lim_n \L(\R^n,\ri)$ can be modeled as a
	(closed) subspace of the infinite product $\prod_n \L(\R^n,\ri)$ of
	Hausdorff spaces. As subspaces of Hausdorff spaces are Hausdorff, it
	suffices to show that $\prod_n \L(\R^n,\ri)$ is Hausdorff. This is true
	for the product computed in general topological spaces. The product in the
	category of CGWH spaces can then be obtained by $k$-ification. However,
	this procedure does not destroy Hausdorffness because $k$-ification
	preserves open sets. We conclude that $\prod_n \L(\R^n,\ri)$ and its
	subspace $\L(\ri,\ri)$ are Hausdorff.
\end{proof}

We are now ready to obtain a few important facts about the topology on
$\et(H,G)$.
\begin{lemma}
	\label{lem:ethgtopology}
	The two product topologies $\map(H,G) \times E(H,G)$ and $\map(H,G) \times_0 E(H,G)$
	agree, and they are Hausdorff. The subspace $\et(H,G)$ is compactly generated
	and Hausdorff. In particular, it is CGWH.
\end{lemma}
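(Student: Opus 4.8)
The plan is to deduce everything from two facts: that $\map(H,G)$ is locally compact Hausdorff, and that $\et(H,G)$ sits inside $\map(H,G)\times E(H,G)$ as a closed subspace.

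First I would observe that $\map(H,G)$ is locally compact Hausdorff. By Proposition~\ref{prop:maphgdecomp} it decomposes as a coproduct $\coprod_{i\in I}\alpha_i G$ of its $G$-orbits, each of which is open and compact; since $\map(H,G)$ is metrizable, hence Hausdorff, every point thus has a compact Hausdorff open neighborhood, so $\map(H,G)$ is locally compact Hausdorff, in particular CGWH. Moreover $E(H,G)=\L(\ri,\ri)$ is a CGWH space by construction (Observation~\ref{obs:Ltopology}), and it is Hausdorff by Lemma~\ref{lem:lririT2}. I would then invoke the standard point-set fact (see~\cite{strickland_category_2009}) that the product $A\times_0 X$ of a locally compact Hausdorff space $A$ with a compactly generated space $X$ is already compactly generated; applied with $A=\map(H,G)$ and $X=E(H,G)$ it shows that $\map(H,G)\times_0 E(H,G)$ is compactly generated, so it coincides with its $k$-ification $\map(H,G)\times E(H,G)$. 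Being a product of Hausdorff spaces, this common space is also Hausdorff; so both product topologies agree and are CGWH.

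Next I would show that $\et(H,G)$ is closed in $\map(H,G)\times E(H,G)$. For each fixed $h\in H$ the evaluation $\map(H,G)\to G$, $\alpha\mapsto\alpha(h)$, is continuous --- it is $1$-Lipschitz with respect to the metric $d(\alpha,\beta)=\max_{h\in H}d_G(\alpha(h),\beta(h))$ used to topologize $\map(H,G)$ in the proof of Proposition~\ref{prop:maphgdecomp} --- hence the map
\[
	\psi_h\colon\map(H,G)\times E(H,G)\longrightarrow E(H,G)\times E(H,G),\qquad
	(\alpha,x)\longmapsto\bigl(x\cdot(h,\alpha(h)),\,x\bigr)
\]
is continuous. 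Since $E(H,G)$ is Hausdorff its diagonal $\Delta$ is closed, so each $\psi_h^{-1}(\Delta)$ is closed; and because $E(H,G)^\alpha=E(H,G)^{\graph(\alpha)}$ is by definition the set of points fixed by all the elements $(h,\alpha(h))$, we get $\et(H,G)=\bigcap_{h\in H}\psi_h^{-1}(\Delta)$, which is therefore closed. Finally, a closed subspace of a CGWH space is again CGWH, since closed subspaces inherit compact generation and any subspace of a Hausdorff space is Hausdorff; hence $\et(H,G)$ is compactly generated and Hausdorff, completing the proof.

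I expect the step needing the most care to be the local compactness of $\map(H,G)$: this is exactly where the orbit decomposition of Proposition~\ref{prop:maphgdecomp} --- and hence the ``close maps are conjugate'' lemma --- is indispensable, and without it the two product topologies would have no reason to agree. The only other point to watch is that one must intersect the preimages $\psi_h^{-1}(\Delta)$ over all $h\in H$; a single $h$ would merely cut out the larger set of pairs $(\alpha,x)$ with $x$ fixed by that one group element.
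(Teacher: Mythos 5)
Your proof is correct, and the second half takes a genuinely different route from the paper. The paper proves compact generation of $\et(H,G)$ by invoking Proposition~\ref{prop:ptildefiberbundle}: the local trivializations exhibit $\et(H,G)$ as locally homeomorphic to $U\times E(H,G)^\alpha$ with $U\subseteq\map(H,G)$ open, and local compact generation is then enough. You instead show directly that $\et(H,G)$ is \emph{closed} in $\map(H,G)\times E(H,G)$ (by writing it as $\bigcap_{h\in H}\psi_h^{-1}(\Delta)$ with $\Delta$ closed since $E(H,G)$ is Hausdorff) and invoke the standard fact that closed subspaces of compactly generated spaces are compactly generated. Your argument is more self-contained, since it does not rely on having first established the fiber bundle structure of $\widetilde{p}_{H,G}$; conversely, the paper's route reuses work already done for other purposes. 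Both approaches hinge on local compactness of $\map(H,G)$ to identify $\times$ with $\times_0$, and here you are actually \emph{more} careful than the paper: you justify local compactness via the open-compact orbit decomposition of Proposition~\ref{prop:maphgdecomp}, whereas the paper's phrase ``metrizable and hence locally compact Hausdorff'' reads as a non sequitur (metrizability alone does not imply local compactness), even though the underlying claim is of course true by exactly the argument you give.
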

\begin{proof}
	The space $\map(H,G)$ is metrizable and hence locally compact Hausdorff.
	Thus, the product $\map(H,G) \times E(H,G)$ agrees with the product
	$\map(H,G) \times_0 E(H,G)$ taken in general topological spaces.

	For the second statement, first observe that $\map(H,G)$ is Hausdorff
	by the discussion above and that the space $E(H,G)$ is Hausdorff by
	Lemma~\ref{lem:lririT2}. Therefore, $\map(H,G) \times E(H,G)$ is Hausdorff
	and so is its subspace $\et(H,G)$. This implies the weak Hausdorff
	property of course.

	In order to prove that $\et(H,G)$ is compactly generated, recall that 
	$\et(H,G)$ is a fiber bundle over $\map(H,G)$
	by Proposition~\ref{prop:ptildefiberbundle}. In particular, the proof
	shows that each point of $\et(H,G)$ has a neighborhood that is
	homeomorphic to $U \times E(H,G)^\alpha$ for $U \subseteq \map(H,G)$ open
	and $\alpha \in \map(H,G)$.

	According to our convention, the product $U \times E(H,G)^\alpha$ is
	computed in the category of CGWH spaces, and therefore automatically
	compactly generated. As $U$ is locally compact Hausdorff, we actually have
	$U \times E(H,G)^\alpha = U \times_0 E(H,G)^\alpha$.

	In any case, the space $\et(H,G)$ is locally compactly generated which
	implies that it is compactly generated, concluding the proof.
\end{proof}

\bibliographystyle{abbrv}
\bibliography{orb_comparison}

\begin{thebibliography}{10}

\bibitem{bredon_introduction_1972}
G.~E. Bredon.
\newblock {\em Introduction to {Compact} {Transformation} {Groups}}, volume~46
  of {\em Pure and {Applied} {Mathematics}}.
\newblock Academic Press, New York-London, 1972.

\bibitem{conner_differentiable_1964}
P.~E. Conner and E.~E. Floyd.
\newblock {\em Differentiable {Periodic} {Maps}}.
\newblock Ergebnisse der {Mathematik} und ihrer {Grenzgebiete}, {N}. {F}.,
  {Band} 33. Springer-Verlag, Berlin-G\"ottingen-Heidelberg, 1964.

\bibitem{gepner_homotopy_2007}
D.~Gepner and A.~Henriques.
\newblock Homotopy {Theory} of {Orbispaces}.
\newblock Available at
  \href{http://arxiv.org/abs/math/0701916}{arXiv:math/0701916}, 2007.

\bibitem{hovey_model_1999}
M.~Hovey.
\newblock {\em Model {Categories}}, volume~63 of {\em Mathematical {Surveys}
  and {Monographs}}.
\newblock American Mathematical Society, Providence, RI, reprinted (2007)
  edition, 1999.

\bibitem{koerschgen_dwyer-kan_2017}
A.~K\"orschgen.
\newblock Dwyer-{Kan} {Equivalences} {Induce} {Equivalences} on {Topologically}
  {Enriched} {Presheaves}.
\newblock Available at
  \href{http://arxiv.org/abs/1704.07472}{arXiv:1704.07472}, 2017.

\bibitem{may_geometry_1972}
J.~P. May.
\newblock {\em The {Geometry} of {Iterated} {Loop} {Spaces}}, volume 271 of
  {\em Lecture {Notes} in {Mathematics}}.
\newblock Springer-Verlag, Berlin-Heidelberg-New York, 1972.

\bibitem{rezk_global_2014}
C.~Rezk.
\newblock Global {Homotopy} {Theory} and {Cohesion}.
\newblock Available at \url{https://faculty.math.illinois.edu/~rezk/}, October
  2014.

\bibitem{schwede_orbispaces_2017}
S.~Schwede.
\newblock Orbispaces, orthogonal spaces, and the universal compact {Lie} group.
\newblock Available at
  \href{http://arxiv.org/abs/1711.06019}{arXiv:1711.06019}, 2017.

\bibitem{schwede_global_2017}
S.~Schwede.
\newblock Global homotopy theory.
\newblock Available at
  \href{http://arxiv.org/abs/1802.09382}{arXiv:1802.09382}, 2018.

\bibitem{segal_categories_1974}
G.~Segal.
\newblock Categories and cohomology theories.
\newblock {\em Topology}, 13(3):293--312, Sept. 1974.

\bibitem{strickland_category_2009}
N.~P. Strickland.
\newblock The {Category} of {CGWH} {Spaces}.
\newblock Available at
  \url{http://neil-strickland.staff.shef.ac.uk/courses/homotopy/cgwh.pdf},
  August 2009.

\bibitem{tom_dieck_transformation_1987}
T.~tom Dieck.
\newblock {\em Transformation {Groups}}, volume~8 of {\em De {Gruyter}
  {Studies} in {Mathematics}}.
\newblock Walter de Gruyter \& Co., Berlin, 1987.

\end{thebibliography}

\vspace{2em}
\noindent
Mathematisches Institut der Universität Bonn\\
Endenicher Allee 60, 53115 Bonn, Germany

\vspace{1em}
\noindent
E-mail address: \href{mailto:math@koerschgen.name}{math@koerschgen.name}

\end{document}